\numberwithin{equation}{section}
\newtheorem{teo}{Theorem}[section]
\newtheorem{prop}[teo]{Proposition}
\theoremstyle{definition}
\newtheorem{rem}[teo]{Remark}
\def\a{\alpha}
\def\b{\beta}
\def\R{\mathbb{R}}
\def\N{\mathbb{N}}
\def\e{\varepsilon}
\def\Re{\mathcal{R}}
\def\cX{\mathcal{X}}
\def\T{\mathcal{T}}
\def\S{\mathcal{S}}
\def\O{\mathcal{O}}
\def\oU{\overline{U}}
\def\ov{\overline{v}}
\def\ow{\overline{w}}
\def\tU{\widetilde{U}}
\def\tv{\widetilde{v}}
\def\tw{\widetilde{w}}
\def\eu{e_{\oU}}
\def\ev{e_{\ov}}
\def\ew{e_{\ow}}
\begin{document}

\title[A priori feedback estimates]{A priori feedback estimates for multiscale reaction-diffusion systems}

\author{Martin Lind}
\address{Department of Mathematics and Computer Science\\Karlstad University\\
651 88 Karlstad\\ Sweden }
\email{martin.lind@kau.se}

\author{Adrian Muntean}
\address{Department of Mathematics and Computer Science\\Karlstad University\\
651 88 Karlstad\\ Sweden }
\email{adrian.muntean@kau.se}

\subjclass[2010]{35K57, 65M60, 35B27}

\keywords{Multiscale reaction-diffusion systems, micro-macro coupling, Galerkin approximation, feedback finite element method, multivariate splines}

\begin{abstract}
We study  the approximation of a multiscale reaction-diffusion system posed on both macroscopic and microscopic space scales. The coupling between the scales is done via micro-macro flux conditions. Our target system has a typical structure for reaction-diffusion-flow problems in  media with distributed microstructures (also called, double porosity materials).  Besides ensuring basic estimates for the convergence of  two-scale semi-discrete Galerkin approximations, we provide a set of {\em a priori} feedback estimates  and a local feedback error estimator that help  in designing a distributed-high-errors strategy to allow for a computationally efficient zooming in and  out from microscopic structures.  The error control on the feedback estimates relies on  two-scale-energy, regularity, and interpolation  estimates as well as  on a fine bookeeping of the sources responsible with the propagation of the (multiscale) approximation errors. The working technique based on {\em a priori } feedback estimates is in principle applicable to a large class of systems of PDEs with dual structure admitting strong solutions. 
\end{abstract}
\maketitle

\section{Introduction}

Reaction-diffusion systems posed on multiple spatial scales became recently a powerful modelling and simulation tool \cite{Murad,Redeker,Jager2}. Conceptually, multiscale models are very much linked to physical scenarios where averaging procedures (like multiple scales asymptotic expansions, periodic/stochastic homogenization, REV-based methods, renormalization) fail to bring information in a consistent  trustful  way up to observable macroscopic scales. Often, either due to special geometric sub-structures (cf. \cite{Barenblatt}, e.g.), to separated fast-slow characteristic times (cf. \cite{Sanchez,E,Jager1}, e.g.) or to a suitable combination of both such effects, balance laws of extensive physical quantities must be posed on geometries with separated tensor-product space-scale structures (cf. \cite{Showalter,NeussRadu,MunteanNeussRadu}).  

By zooming in and out at the position of a continuum collection of material points, such models allow for a significant enrichment of a  usually rough/coarse macroscopic information from a detailed microscopic picture. Such new multiscale modelling possibilities also open a set of fundamental questions that must be addressed mathematically so that these multiscale models not only get a well-posedness theory, but also are easily accessible through computations. This is the place where our paper contributes. 

We present a two-scale  Galerkin approach for a particular class of multiscale reaction-diffusion systems with linear
coupling between the microscopic and macroscopic variables. Exploiting the special structure of the model,
the functions spaces used for the approximation of the solution are chosen as tensor
products of spaces on the macroscopic domain and on the standard cell associated to the
microstructure. Uniform estimates for the finite dimensional approximations allow us to ensure the convergence of the Galerkin approximates. However, since the zooming in/out must be in principle done for a large number of spatial points of the macroscopic domain to ensure a good quality of the approximation, and moreover, the physics at the microscopic level is quite complex, we are wondering whether we can build mesh refinement strategies in an {\em a priori} fashion to reduce drastically the computational effort up to a minimum level, absolutely  needed to trust the simulation output. 

In the context of finite element methods, so called \emph{adaptive mesh refinement strategies} have been used for a long time. The idea is very natural: starting from a coarse mesh, one computes the approximate solution and a quantity measuring (in some sense) the local error. One then subdivides those elements of the mesh with large local errors. 
Practical experience suggest that iterations of such adaptive refinements generally converge. This was first shown rigorously in the one-dimensional case by Babu\v{s}ka and Vogelius \cite{BabuskaVogelius} using {\em a priori} feedback estimates.
Inspired by \cite{BabuskaVogelius}, there has been a lot of work on {\em a posteriori} analysis of adaptive finite element methods in higher dimensions (see the survey \cite{Nochetto1} and the references therein). 

The aim of this paper is to develop a feedback scheme in the vein of \cite{BabuskaVogelius} for systems of reaction-diffusion equations posed on multiple scales. As discussed above, such an approach is motivated by the desire to avoid 'zooming into' the micro-structure too often.

The main problem we have to overcome is the fact that the errors of approximation on the two scales are coupled, and it is not clear from the outset how this effects the total error. Our main insight is the fact that the errors on the microscopic scale are in some sense controlled by the macroscopic error. This allows us to develop a feedback refinement scheme based on \emph{a priori} errors on the macroscopic scale that ensures convergence of our approximate solutions. We note however that we do not obtain any specific rate of convergence for the approximates. This is to be expected due to the fact that we consider completely general meshes without any special a priori imposed structure. 

Finally, we want to mention the connection to nonlinear approximation matters. Indeed, Galerkin approximation on adaptively generated meshes is an instance of a nonlinear approximation scheme. It is intuitively clear that an adaptive mesh refinement scheme yields a sequence of meshes where the local errors are, roughly speaking, increasingly 'equidistributed'. In this direction, see the discussion in \cite{Nochetto1} and the papers \cite{Binev1,Binev2}.



\subsection{Notations}
For the convenience of the reader, we introduce some standard notation that we shall use.

Below $D_1\subset\R^m$ and $D_2\subset\R^n$ will be arbitrary connected domains. 
For $(x,y)\in D_1\times D_2$ we denote by $\nabla_x,\nabla_y$ the gradients and $\Delta_x,\Delta_y$ the Laplace operators with respect to the indicated variables. Generally we omit the index of the first variable: we shall write $\nabla=\nabla_x$ and $\Delta=\Delta_x$.
Denote by $L^2(D_1)$ the space of measurable functions on $D_1$ such that
$$
\|f\|^2_{L^2(D_1)}:=\int_{D_1}|f|^2<\infty.
$$
The Sobolev space $H^k(D_1)$ consists of all functions with all $k$-th order weak partial derivatives in $L^2(D_1)$. E.g., $f\in H^1(D_1)$ if and only if
$$
\|f\|^2_{H^1(D_1)}:=\|f\|^2_{L^2(D_1)}+\|\nabla_xf\|^2_{L^2(D_1)}<\infty.
$$ 
We shall frequently use function spaces with mixed norms (also called \emph{Bochner spaces}) of different types. To be as general as possible, let $(B,\|\cdot\|_B)$ and $(X,\|\cdot\|_X)$ denote Banach spaces of functions defined on the domains $D_1$, $D_2$ respectively. We say that a function $f(x,y)$ defined on the set $D_1\times D_2$ belongs to the space $X(D_1,B)$ if
$$
\|\;\|f(x,\cdot)\|_B\;\|_X<\infty.
$$
For instance, if $\Omega\subset\R^d$ is a domain, the space $L^2([0,T],H^1(\Omega))$ consists of all functions $f(t,x)$ such that
$$
\int_0^T\|f(t,\cdot)\|^2_{L^2(\Omega)}+\|\nabla_xf(t,\cdot)\|^2_{L^2(\Omega)}dt<\infty.
$$

\section{Setting of the problem}

\subsection{Physical meaning and strong formulation} 

Let $\Omega\subset\R^m$ be a connected domain that has at each $x\in\Omega$ a standard microscopic pore $\mathcal{Y}$. Thus, $\Omega\times\mathcal{Y}$ models a porous material with the macroscopic domain $\Omega$ and microstructure represented by $\mathcal{Y}$, see Figure 1 below)
\begin{figure}[H]
\centering
\includegraphics[scale=0.8]{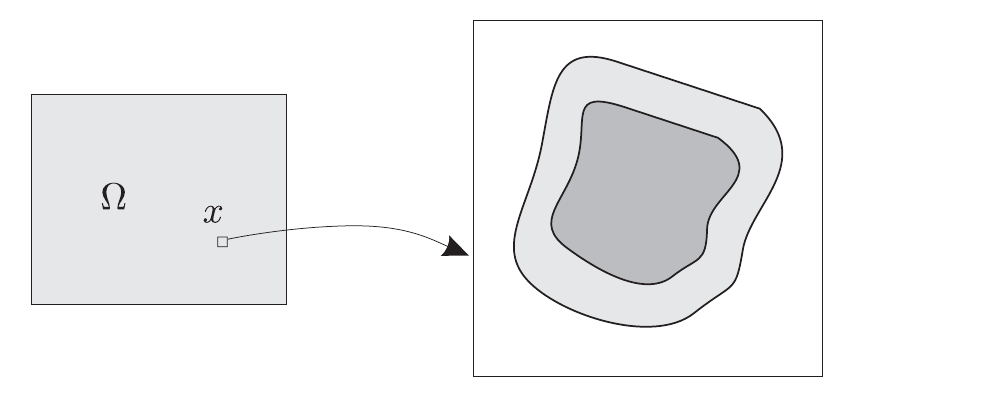}
\caption{The macroscopic domain $\Omega$ and microscopic cell $\mathcal{Y}$ at $x\in\Omega$}
\end{figure} 
We assume that $\mathcal{Y}$ is partially filled with water and partially with gas. Denote by $Y\subset \mathcal{Y}$ be the wet region of the microstructure (light gray in Figure 1) and $Y^g$ the gas-filled part (dark gray in Figure 1). Set $\Gamma=\partial Y$ and denote $\Gamma^R$ the gas-liquid interface (boundary between light and dark gray in Figure 1).
We consider a chemichal species $A_1$ penetrating $\Omega$ through the air-filled part of the pore and dissolves into the water along $\Gamma^R$. In water, $A_1$ transforms to $A_2$ and reacts with the species $A_3$, producing water and other products (typically salts).

We denote by $U$ the macroscopic mass concentration of $A_1$ and by $v,w$ the microscopic mass concentrations of $A_2,A_3$ respectively. This particular micro-macro structure of the model equations has been rigorously derived by mean of periodic homogenization arguments in \cite{Gakuto}. See also \cite{Sebam_CRAS} for a related setting involving freely evolving reaction interfaces inside the periodic microstructure.

Denote by $S=(0,T)$ for a given $T>0$. The concentrations $(U,v,w)$ satisfy the following system of equations
\begin{equation}
\label{system}
\left\{\begin{array}{rcl}
\partial_tU-D_U\Delta U&=&\gamma\int_{\Gamma^R}-D_v\nabla_y v(t,x,y)\cdot n_yd\sigma_y\\
\partial_tv-D_v\Delta_yv&=&-\eta(v,w)\\
\partial_tw-D_w\Delta_yw&=&-\eta(v,w).
\end{array} \right.
\end{equation}
Above, $n_y$ is the unit normal of $\Gamma^R$ and $d\sigma_y$ is the surface measure on $\Gamma^R$.
We impose the following boundary conditions
\begin{equation}
\label{systemBoundary}
\left\{\begin{array}{rcl}
U(t,x)&=&U_D(t,x)\quad{\rm at}\quad S\times \partial\Omega\\
-D_v\nabla_y v\cdot n_y&=&\a(v-U)\quad{\rm at}\quad S\times\Omega\times \Gamma^R\\
-D_v\nabla_y v\cdot n_y&=&0\quad{\rm at}\quad S\times\Omega\times\Gamma\setminus\Gamma^R\\
-D_w\nabla_y w\cdot n_y&=&0\quad{\rm at}\quad S\times\Omega\times\Gamma
\end{array} \right.
\end{equation}
and initial conditions
\begin{align}
\label{systemInitial}
U(0,x)=U_I(x),\quad v(0,x,y)=v_I(x,y),\quad w(0,x,y)=w_I(x,y)
\end{align}
for $x\in\Omega$ and $(x,y)\in\Omega\times Y$.

It is worth noting that the coupling between the macroscopic scale and the microscopic scale takes place at two prominent places: On one hand, the coupling is present  in the source/sink term in the mass balance equation governing the evolution of $U$, while on the other hand it appears explicitly in the micro-macro flux condition. 
The parameter $\alpha>0$ tunes the transport of mass across air-water interfaces, while the parameter $\gamma>0$ ensure the conservation of mass when the information is transmitted between the micro and macro scales and {\em vice versa}. 

\subsection{List of assumptions}
In this subsection we collect all assumptions on the data given above. 
\begin{enumerate}
\item\label{A1} The domain $\Omega$ is convex and $\partial\Omega$ is Lipschitz;
\item\label{A2} $\Gamma,\Gamma^R$ are Lipschitz and $d\sigma_y(\Gamma^R)>0$ (where $d\sigma_y$ denotes surface measure);
\item\label{A3} $D_U,D_v,D_w$ are positive constants;
\item \label{A4} $\eta(x,y)$ is globally Lipschitz continuous with respect to both variables;
\item\label{A5} the boundary value $U_D$ is the trace 
 on $\partial\Omega$ of a function $U^*_D\in L^2(S, H^1(\Omega))$;
\item\label{A6} the initial values $U_I,v_I,w_I$ satisfy
$$
(U_I,v_I,w_I)\in H^1(\Omega)\times [L^2(\Omega,H^1(Y))]^2;
$$
\item\label{A7} the initial values and their \emph{Galerkin projections}
satisfy estimates of the type
$$
\|U_I-\widetilde{U_I}\|^2_{L^2(\Omega)}=\mathcal{O}(h^2_\Omega),
$$
and
$$
\|\chi_I-\widetilde{\chi_I}\|^2_{L^2(\Omega, L^2(Y) )}=\mathcal{O}(h^2_Y)\quad{\rm for}\quad\chi\in\{v,w\},
$$
(see Section 3.2 below for an explaination of the previous notation).
\end{enumerate}
We will make some remarks on the assumptions. To be able to lift the spatial regularity in the macroscopic domain, we assume that $\Omega$ is convex and $\partial\Omega$ is Lipschitz (see e.g. \cite{Grisvard}). Likewise, to lift regularity in the micro-domain, we either need $Y$ to be convex with Lipschitz $\partial Y$, or $Y$ can be taken arbitrary but in that case $\partial Y$ must be smoother, e.g. $\partial Y\in C^{2}$. Note that if later on in future approaches $\partial Y$ will be freely evolving in time, then  the convexity assumption on $Y$ is not anymore realistic.

The assumption (\ref{A7}) only states that the initial values may be well-approximated, which we can always assume by taking them smooth enough.

\subsection{Weak formulation}

Our concept of \emph{weak solution} to (\ref{system})-(\ref{systemInitial}) is the following. A triplet $(U,v,w)$ such that $U-U_D^*\in L^2(S,H^1_0(\Omega))$, $\partial_tU\in L^2(S\times\Omega)$, $(v,w)\in L^2(S,L^2(\Omega,H^1(Y)))^2$, $(\partial_tv,\partial_tw)\in L^2(S\times\Omega\times Y)^2$ is called a weak solution of (\ref{system}) if for a.e. $t\in S$ the equations
\begin{align}
\label{weakU}
\int_\Omega\partial_tU\varphi+D_u\int_\Omega\nabla U\nabla\varphi =\gamma\a\int_{\Omega\times\Gamma^R}(v-U)\varphi d\sigma_ydx
\end{align}
\begin{align}
\nonumber
\int_{\Omega\times Y}\partial_tv\psi+D_v\int_{\Omega\times Y}\nabla_yv\nabla_y\psi
+\a\int_{\Omega\times\Gamma^R}(v-U)\psi d\sigma_y dx\\
\label{weakv}
=-\int_{\Omega\times Y}\eta(v,w)\psi
\end{align}
\begin{align}
\label{weakw}
\int_{\Omega\times Y}\partial_tw\phi+D_w\int_{\Omega\times Y}\nabla_y w\nabla_y\phi =-\int_{\Omega\times Y}\eta(v,w)\phi 
\end{align}
hold for all test functions $(\varphi,\psi,\phi)\in H_0^1(\Omega)\times L^2(\Omega,H^1(Y))^2$, and
\begin{align}
\nonumber
U(0)=U_I\,\,{\rm in}\,\,\Omega,\quad (v(0),w(0))=(v_I,w_I)\,\,{\rm in}\,\,\Omega\times Y.
\end{align}
Existence and uniqueness of the weak solution to the system (\ref{weakU})-(\ref{weakw}) was proved in \cite{MunteanNeussRadu}. We state this result here.
\begin{teo}
\label{Existence} There exists a unique weak solution
\begin{align}
\nonumber
(U,v,w)\in  L^2(S,H^1(\Omega))\times\left[L^2(S,L^2(\Omega,H^1(Y)))\right]^2
\end{align}
to (\ref{system}).
\end{teo}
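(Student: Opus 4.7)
The natural route is a Faedo--Galerkin scheme combined with uniform energy estimates, compactness and a Gronwall argument for uniqueness. First I would homogenize the macroscopic Dirichlet condition by writing $\widetilde{U} = U - U_D^*$, so that $\widetilde{U} \in L^2(S, H^1_0(\Omega))$. Pick a Hilbert basis $\{\varphi_k\}$ of $H^1_0(\Omega)$ (for instance Dirichlet eigenfunctions of $-\Delta$) and a Hilbert basis $\{\psi_j \otimes \chi_l\}$ of $L^2(\Omega, H^1(Y))$ built from tensor products (this is the natural framework in view of the micro--macro structure of the system and fits the later sections of the paper). Seek Galerkin approximates $U_n = U_D^* + \sum_{k\leq n} c_k(t)\varphi_k$, $v_n = \sum_{j,l\leq n} d_{jl}(t)\psi_j\chi_l$, $w_n$ analogously, and insert them into \eqref{weakU}--\eqref{weakw}. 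Since $\eta$ is globally Lipschitz and all couplings are otherwise linear, the resulting finite-dimensional ODE system has a locally Lipschitz right-hand side, so Picard--Lindel\"of gives short-time existence.

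Next comes the key step: a uniform a priori estimate. Testing with $\widetilde{U}_n$, $v_n$, $w_n$ respectively and summing, the diffusion terms produce nonnegative quadratic forms, while the two interface couplings combine into
\[
\gamma\alpha\int_{\Omega\times\Gamma^R}(v_n-U_n)(v_n-\widetilde{U}_n-U_D^*)\,d\sigma_y dx
+\alpha\int_{\Omega\times\Gamma^R}(v_n-U_n)v_n\,d\sigma_y dx,
\]
which after a rescaling (absorbing $\gamma$ into a test-function multiplier, as is standard for this class of micro--macro problems) produces the nonnegative quadratic $\alpha\|v_n-U_n\|_{L^2(\Omega\times\Gamma^R)}^2$. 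The remaining terms---the nonlinearity $\eta(v_n,w_n)$, the $U_D^*$-related forcing and the coupling mixed terms---are controlled by Young's inequality, the trace inequality $H^1(Y)\hookrightarrow L^2(\Gamma^R)$ and the Lipschitz property of $\eta$. Gronwall's inequality then yields bounds for $(U_n, v_n, w_n)$ uniform in $n$ in $L^\infty(S; L^2)\cap L^2(S; H^1)$ in the appropriate spaces, and similar estimates for the time derivatives by duality. Banach--Alaoglu provides weakly convergent subsequences, and the Aubin--Lions lemma gives the strong $L^2$ convergence needed to pass to the limit in $\eta(v_n, w_n)$ and in the boundary trace term. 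One checks that the limit satisfies \eqref{weakU}--\eqref{weakw} and recovers the initial data by the standard continuity-in-time argument.

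For uniqueness, take two weak solutions, write $\Delta U, \Delta v, \Delta w$ for their differences, test the resulting equations against $\Delta U, \Delta v, \Delta w$ and sum. The interface coupling again produces the nonnegative quadratic $\alpha\|\Delta v - \Delta U\|_{L^2(\Omega\times\Gamma^R)}^2$, the Lipschitz property of $\eta$ bounds $|\eta(v_1,w_1)-\eta(v_2,w_2)|\lesssim |\Delta v|+|\Delta w|$, and the remaining cross-terms are handled by Young's inequality. One obtains a differential inequality of Gronwall type in $\|\Delta U\|^2+\|\Delta v\|^2+\|\Delta w\|^2$ with zero initial data, forcing all three differences to vanish. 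The main technical obstacle, both in existence and in uniqueness, is the simultaneous control of the interface coupling term: on one side it mixes the macroscopic test space $H^1_0(\Omega)$ with the microscopic trace space on $\Gamma^R$, and the compatibility between the weights $\alpha$ and $\gamma$ is precisely what allows these terms to be combined into a single sign-definite boundary quadratic form rather than being merely estimated away.
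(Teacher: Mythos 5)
Your Faedo--Galerkin sketch (tensor-product bases, an energy estimate obtained by weighting the macroscopic equation by $1/\gamma$ so that the two interface couplings combine into the dissipative quantity $\alpha\|v-U\|^2_{L^2(\Omega\times\Gamma^R)}$ on the good side of the inequality, Gr\"onwall, compactness, and a Gr\"onwall-type uniqueness argument) is essentially the paper's approach: Theorem~\ref{Existence} is not proved here but quoted from \cite{MunteanNeussRadu}, whose proof follows exactly this two-scale Galerkin strategy, mirrored by the semi-discrete scheme of Section~3.2. Two cosmetic caveats that do not affect the conclusion: your displayed coupling term is garbled (as written, $v_n-\widetilde{U}_n-U_D^*=v_n-U_n$, which would give the quadratic the wrong sign if taken literally), and dual-space bounds on the time derivatives are weaker than the $\partial_tU\in L^2(S\times\Omega)$ demanded by the paper's definition of weak solution, for which one must test with the time derivatives of the approximants as in Appendix~A.
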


We also have the following regularity lift.
\begin{teo}
\label{highRegularity}
Assume that the initial values of (\ref{system}) satisfy (\ref{A6}).
Then the weak solutions $(U,v,w)$ of (\ref{system}) satisfy
\begin{align}
\label{regLift}
(U,v,w)\in L^2(S,H^2(\Omega))\times [L^2(S,L^2(\Omega,H^2(Y)))]^2.
\end{align}
\end{teo}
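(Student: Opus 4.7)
The plan is to exploit the temporal regularity built into the weak-solution class, namely $\partial_t U \in L^2(S\times\Omega)$ and $\partial_t v, \partial_t w \in L^2(S\times\Omega\times Y)$, to convert each parabolic problem into an elliptic boundary-value problem with $L^2$-forcing, and then to apply Grisvard-type $H^2$-regularity on the convex Lipschitz domain $\Omega$ and on the (convex Lipschitz or $C^2$) reference cell $Y$ separately. The proof thus splits naturally into three steps, one per unknown.

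First I would handle the microscopic equations. For a.e. $(t,x)\in S\times\Omega$, the functions $v(t,x,\cdot)$ and $w(t,x,\cdot)$ satisfy
$$-D_v\Delta_y v = -\partial_t v - \eta(v,w),\qquad -D_w\Delta_y w = -\partial_t w - \eta(v,w)\quad\text{in }Y,$$
with homogeneous Neumann data on $\Gamma\setminus\Gamma^R$ and, for $v$, the Robin data $-D_v\nabla_y v\cdot n_y = \alpha(v-U(t,x))$ on $\Gamma^R$. The Lipschitz assumption (\ref{A4}) together with the weak-solution bounds gives an $L^2(Y)$-forcing whose square is integrable over $(t,x)$. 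Elliptic regularity for the pure-Neumann (respectively mixed Neumann/Robin) Poisson problem on the geometrically nice cell $Y$ yields a pointwise estimate
$$\|w(t,x,\cdot)\|^2_{H^2(Y)} \leq C\bigl(\|\partial_t w(t,x,\cdot)\|^2_{L^2(Y)} + \|\eta(v,w)(t,x,\cdot)\|^2_{L^2(Y)} + \|w(t,x,\cdot)\|^2_{H^1(Y)}\bigr),$$
and an analogous one for $v$ with an additional $|U(t,x)|^2$ coming from the $H^{1/2}(\Gamma^R)$-norm of the constant-in-$y$ Robin datum. Integration in $(t,x)\in S\times\Omega$ then closes both microscopic bounds, producing $v,w\in L^2(S,L^2(\Omega,H^2(Y)))$.

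Next I would treat the macroscopic equation by rewriting it as
$$-D_U\Delta U = -\partial_t U + \gamma\alpha\int_{\Gamma^R}(v-U)\,d\sigma_y\quad\text{in }\Omega,\qquad U|_{\partial\Omega}=U_D.$$
The surface integral of $v-U$ lies in $L^2(\Omega)$ pointwise in $t$ by the trace theorem applied to $v(t,\cdot,\cdot)\in L^2(\Omega,H^1(Y))$, so the forcing sits in $L^2(S,L^2(\Omega))$. After shifting by a sufficiently regular extension of $U_D$ (tacitly strengthening (\ref{A5}) so that $\Delta U_D^*\in L^2$) and invoking Grisvard's Dirichlet $H^2$-regularity on the convex Lipschitz domain $\Omega$, one obtains $U\in L^2(S,H^2(\Omega))$.

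The main obstacle is the rigorous justification of these pointwise elliptic-regularity statements together with measurability in $(t,x)$, since a priori the weak solution only carries $H^1$-regularity in the spatial variables. The cleanest rigorous route is to re-derive the $H^2$-bounds at the level of a Galerkin scheme by testing with time-difference quotients or with the Laplacians of the approximate solutions, exploiting the fact that when the $v$-equation is weighted by $\gamma$ the boundary terms on $\Gamma^R$ coming from $(v-U)\partial_t v$ and $(v-U)\partial_t U$ collapse into the perfect square $\tfrac{\gamma\alpha}{2}\partial_t\!\int_{\Gamma^R}(v-U)^2$, thereby decoupling the cross-scale contributions. A Gronwall-type argument then delivers bounds uniform in the discretization, and passage to the limit combined with Theorem \ref{Existence} identifies the enhanced-regularity limit with the unique weak solution.
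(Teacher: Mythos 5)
Your proposal is correct in substance and reaches the result by a genuinely different (more direct) route than the paper, although your ``fallback'' coincides with what the paper actually does. The paper's Appendix A spends all of its effort establishing square-integrability of the time derivatives: it tests the Galerkin system (\ref{GalU})--(\ref{GalW}) with $(\partial_t\tU,\partial_t\tv,\partial_t\tw)$, absorbs the interfacial coupling term $J(\partial_t\tv,t)$ into the total time derivative of $\Theta(t)=\int_{\Omega\times\Gamma^R}(\tv^2-2\tU\tv)\,d\sigma_ydx$, applies Gr\"onwall, and passes to the weak limit; the elliptic lift itself is dispatched with a one-line reference to \cite{Evans}. You instead observe that $\partial_tU\in L^2(S\times\Omega)$ and $\partial_tv,\partial_tw\in L^2(S\times\Omega\times Y)^2$ are already built into the weak-solution class of Theorem \ref{Existence}, so the parabolic-to-elliptic bootstrap can be run directly on the exact solution, slice by slice in $(t,x)$, with the elliptic-regularity step made explicit; your backup Galerkin argument with difference quotients and the perfect square $\tfrac{\gamma\alpha}{2}\partial_t\int_{\Gamma^R}(v-U)^2$ is an algebraically cleaner version of the paper's computation (the paper's $\Theta$ is not sign-definite and must be re-estimated via (\ref{InterpolationTrace})). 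Your route buys brevity and, importantly, surfaces two points the paper glosses over: (i) $H^2(\Omega)$-regularity of $U$ requires more of the Dirichlet datum than assumption (\ref{A5}) provides, which you correctly flag and patch; and (ii) the conormal datum on $\partial Y$ jumps from $\alpha(v-U)$ to $0$ across $\partial\Gamma^R$, so the claimed $H^2(Y)$ lift needs either $\Gamma=\Gamma^R$ or a separate argument that this datum lies in $H^{1/2}(\partial Y)$ --- a caveat that applies equally to the paper's ``as in Evans'' step, so I do not count it against you. One small correction in your favour: the paper's intermediate claim (\ref{L^2TimeDerivative}) asserts $\partial_tU\in L^2(S,H^1(\Omega))$, which its estimate (\ref{regLift8}) does not deliver; only $\partial_tU\in L^2(S\times\Omega)$ is needed for the bootstrap, and that weaker, correct statement is exactly what your argument uses.
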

We give the proof of the above theorem in Appendix A.
\section{Preliminaries}

\subsection{Auxiliary results}
For the reader's convenience, we collect in this subsection a few standard inequalities that we shall need.

The following very elementary inequality will be very useful. Let $\e>0$ be an arbitrary parameter, then
\begin{eqnarray}
\label{epsilonAMGM}
2|ab|\le \e a^2+\frac{b^2}{\e}.
\end{eqnarray}
Further, we have the following \emph{interpolation-trace inequality}: assume that $Y$ is a Lipschitz domain and that $f\in L^2(\Omega, H^1(Y))$, then for any given parameter $\rho>0$ we have
\begin{align}
\label{InterpolationTrace}
\int_{\Omega\times\partial Y}f^2d\sigma_ydx\le\rho\int_{\Omega\times Y}|\nabla_yf|^2+c_\rho\int_{\Omega\times Y}|f|^2.
\end{align}

\subsection{Galerkin approximation}

We shall discuss briefly the finite-dimensional approximation to (\ref{system}). The discussion will intentionally be rather terse. We will use piecewise polynomial functions.


Let $D\subset\R^2$ be a polygonal domain (the assumption $D\subset\R^2$ is not necessary, it only makes the terminology simpler). A \emph{partition} $\T=\{\Delta\}$ of $D$ is a finite collection of convex polygonal sets with disjoint interiors such that
\begin{align}
\nonumber
\Omega=\bigcup_{\Delta\in\T}\Delta.
\end{align}
We denote by $S_0(\T)$ the set of all functions $s$ such that $s\in H^2(D)$ and the restriction of $s$ to any $\Delta$ is a polynomial of degree at most $k$ for some fixed pre-specified $k\in\N$. Clearly $\S_0(\T)$ is a finite-dimensional space. For more details on the finite element method, see e.g. \cite{Ciarlet}.

A partition $\T'$ is said to be a \emph{refinement} of $\T$ if for each $\Delta\in\T$ there is unique subset $\Lambda\subset\T'$ such that
\begin{align}
\nonumber
\Delta=\bigcup_{\Delta'\in\Lambda}\Delta'
\end{align}
If $\T'$ is a refinement of $\T$, then
\begin{align}
\nonumber
\S_0(\T)\subset\S_0(\T').
\end{align}

Let $\T_\Omega=\{\Delta\}$ and $\T_Y=\{\Delta'\}$ be partitions of $\Omega$ and $Y$.  Define
\begin{align}
\label{coarseness}
h_\Omega=\max_{\Delta\in\T_\Omega}{\rm diam}(\Delta)\quad{\rm and}\quad h_Y=\max_{\Delta'\in\T_Y}{\rm diam}(\Delta'),
\end{align} 
where ${\rm diam}(E)=\sup_{x,y\in E}|x-y|$. Consider $\S_0(\T_\Omega)$ and $\S_0(\T_Y)$ and let $\{\varphi_i\}\subset H^2(\Omega)\cap H_0^1(\Omega)$ and $\{\psi_j\}\subset H^2(Y)$ be bases for these spaces respectivelty.
Define $\S(\T_\Omega)$ and $\S(\T_Y)$ the set of all functions of the forms
\begin{align}
\nonumber
\sum a_i(t)\varphi_i(x)
\end{align} 
and
\begin{align}
\nonumber
\sum b_{ij}(t)\varphi_i(x)\psi_j(y).
\end{align}
The \emph{Galerkin projections} of $U,v,w$ are the functions
\begin{align}
\label{ProjU}
\tU(t,x)=\sum a_i(t)\varphi_i(x),\\
\label{ProjV}
\tv(t,x,y)=\sum b_{ij}(t)\varphi_i(x)\psi_j(y)\\
\label{ProjW}
\tw(t,x,y)=\sum c_{ij}(t)\varphi_i(x)\psi_j(y)
\end{align}
that satisfy
\begin{align}
\label{GalU}
\int_\Omega\partial_t\tU\varphi+D_u\int_\Omega\nabla \tU\nabla\varphi =\gamma\a\int_{\Omega\times\Gamma^R}(\tv-\tU)\varphi d\sigma_ydx
\end{align}
\begin{align}
\nonumber
\int_{\Omega\times Y}\partial_t\tv\psi+D_v\int_{\Omega\times Y}\nabla_y\tv\nabla_y\psi
+\a\int_{\Omega\times\Gamma^R}(\tv-\tU)\psi d\sigma_y dx\\
\label{GalV}
=-\int_{\Omega\times Y}\eta(\tv,\tw)\psi
\end{align}
\begin{align}
\label{GalW}
\int_{\Omega\times Y}\partial_t\tw\phi+D_w\int_{\Omega\times Y}\nabla_y\tw\nabla_y\phi =-\int_{\Omega\times Y}\eta(\tv,\tw)\phi 
\end{align}
for all $(\varphi,\psi,\phi)\in\S(T_\Omega)\times[\S(\T_\Omega)\times\S(\T_Y)]^2$. The system (\ref{GalU})-(\ref{GalW}) has a unique solution $(\widetilde{a},\widetilde{b},\widetilde{c})\in C^1(0,T)^{N_1}\times \left[C^1(0,T)^{N_1N_2}\right]^2$, where $N_1={\rm dim}\S(\T_\Omega)$ and $N_2={\rm dim}\S(\T_Y)$; see \cite{MunteanNeussRadu} for an argument that carries over to our slightly different setting.



On $H^1(\Omega)$ and $L^2(\Omega,H^1(Y))$ we consider the inner products
\begin{align}
\label{inProd1}
\langle \varphi,\psi\rangle_{H^1(\Omega)}=\int_\Omega\varphi\psi+\nabla \varphi\nabla \psi,
\end{align}
and
\begin{align}
\label{inProd2}
\langle \varphi,\psi\rangle_{L^2(\Omega,H^1(Y))}=\int_{\Omega\times Y}\varphi\psi+\nabla_y \varphi\nabla_y \psi.
\end{align}
Given a subspace $V\subset H^1(\Omega)$, denote by $V^\perp$ the orthogonal complement of $V$ with respect to $\langle\cdot,\cdot\rangle_{H^1(\Omega)}$, and similarly for any subspace $W\subset L^2(\Omega,H^1(Y))$.



\section{Basic a priori error estimates}

In this section we obtain our main error estimates. Throughout, we assume that the initial values $U_I\in H^1(\Omega)$ and $v_I,w_I\in L^2(\Omega,H^1(Y))$ in order to ensure that (\ref{regLift}) holds.

\subsection{Convergence rates}

Let $\T_\Omega,\T_Y$ be arbitrary partitions. Let $\tU\in\S(\T_\Omega)$ and $\tv,\tw\in\S(\T_\Omega)\times\S(\T_Y)$ be the Galerkin projections (\ref{ProjU})-(\ref{ProjW}) and define
\begin{align}
\nonumber
e_U:=U-\tU,\quad e_v:=v-\tv,\quad e_w:=w-\tw.
\end{align}
Note that
\begin{eqnarray}
\nonumber
H^1(\Omega)&=&\S(\T_\Omega)\oplus\S(\T_\Omega)^\perp\\
\nonumber
\quad L^2(\Omega,H^1(Y))&=&[L^2(\Omega)\times\S(\T_Y)]\oplus [L^2(\Omega)\times\S(\T_Y)^\perp],
\end{eqnarray}
where $\S(\T_\Omega)^\perp$ and $\S(\T_Y)^\perp$ denote the orthogonal complements in $H^1(\Omega)$ and $H^1(Y)$, respectively (the inner products are defined by (\ref{inProd1})) and (\ref{inProd2})). We may write
\begin{align}
\nonumber
e_U=e_U^1+e_U^2,\quad e_v=e_v^1+e_v^2,\quad e_w=e_w^1+e_w^2,
\end{align}
where $e_U^1\in\S(\T_\Omega)$ and $e_U^2\in\S(\T_\Omega)^\perp$ and
\begin{align}
\nonumber
e_v^1,e_w^1\in\S(\T_\Omega)\times\S(\T_Y),\quad e_v^2,e_w^2\in L^2(\Omega)\times\S(\T_Y)^\perp
\end{align}

\begin{prop}
\label{RieszProjections}
Let $\T_\Omega,\T_Y$ be arbitrary partitions and assume that the weak solution $(U,v,w)$ satisfies
\begin{align}
\nonumber
(U,v,w)\in L^2(S,H^2(\Omega))\times L^2(S,L^2(\Omega,H^2(Y))^2.
\end{align}
Denoting
\begin{align}
\label{projections}
\Re_U=\tU+e_U^1,\quad \Re_v=\tv+e_v^1,\quad\Re_w=\tw+e_w^1,
\end{align}
we have for $k\in\{0,1\}$
\begin{align}
\label{aprioriU1}
\|U-\Re_U\|_{L^2(S,H^{k}(\Omega))}\le Ch^{2-k}_\Omega\|U\|_{L^2(S,H^2(\Omega))}
\end{align}
and $\chi\in\{v,w\}$
\begin{align}
\label{apriorivw1}
\|\chi-\Re_\chi\|_{L^2(S,L^2(\Omega, H^k(Y))}\le Ch^{2-k}_Y\|\chi\|_{L^2(S,L^2(\Omega,H^2(Y)))},
\end{align}
where $C$ is an absolute constant.
\end{prop}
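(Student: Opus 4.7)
The structural observation driving the proof is that, by construction, $\Re_U = \tU + e_U^1$ lies in $\S(\T_\Omega)$ while $U - \Re_U = e_U^2$ lies in $\S(\T_\Omega)^\perp$; thus $\Re_U(t,\cdot)$ is characterized as the $H^1(\Omega)$-orthogonal projection of $U(t,\cdot)$ onto $\S(\T_\Omega)$ (a Ritz-type projection). The analogous identification, now with respect to the $L^2(\Omega,H^1(Y))$ inner product, exhibits $\Re_v,\Re_w$ as orthogonal projections in the microscopic variable $y$. In this way the proposition decouples from the PDE dynamics and reduces to Céa-type best-approximation estimates on each scale.

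For $k=1$ and the macroscopic variable $U$, the best-approximation property gives, pointwise in $t \in S$,
$$\|U(t)-\Re_U(t)\|_{H^1(\Omega)} \le \|U(t)-I_{h_\Omega}U(t)\|_{H^1(\Omega)}$$
for any $I_{h_\Omega}U(t)\in\S_0(\T_\Omega)$. Taking $I_{h_\Omega}$ to be a standard Scott--Zhang or Clément quasi-interpolant and applying Bramble--Hilbert on each element $\Delta \in \T_\Omega$ yields $\|U(t)-I_{h_\Omega}U(t)\|_{H^1(\Omega)} \le C h_\Omega\|U(t)\|_{H^2(\Omega)}$; squaring and integrating in $t$ gives (\ref{aprioriU1}) with $k=1$. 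The corresponding bound in (\ref{apriorivw1}) with $k=1$ follows identically but with the interpolation step carried out in the $y$-variable pointwise in $x$ and then integrated in $x$, using the regularity of $Y$ indicated after (\ref{A7}).

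For $k=0$, the plan is to invoke Aubin--Nitsche duality. Freezing $t$ and setting $g := U(t)-\Re_U(t)$, consider the dual problem: find $\phi\in H^1(\Omega)$ solving $-\Delta\phi+\phi=g$ with homogeneous Neumann data on $\partial\Omega$. Assumption (\ref{A1}) combined with Grisvard-type elliptic regularity on convex Lipschitz domains gives $\phi\in H^2(\Omega)$ with $\|\phi\|_{H^2(\Omega)} \le C\|g\|_{L^2(\Omega)}$, and the Neumann condition kills the boundary contribution from integration by parts, so $\|g\|_{L^2(\Omega)}^2 = \langle g,\phi\rangle_{H^1(\Omega)}$. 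Using the orthogonality of $g$ to $\S(\T_\Omega)$ to subtract an interpolant $I_{h_\Omega}\phi$, and then applying the $k=1$ result to $\|g\|_{H^1(\Omega)}$,
$$\|g\|_{L^2(\Omega)}^2 \le \|g\|_{H^1(\Omega)}\|\phi-I_{h_\Omega}\phi\|_{H^1(\Omega)} \le Ch_\Omega^2\|U(t)\|_{H^2(\Omega)}\|g\|_{L^2(\Omega)}.$$
Cancelling one factor of $\|g\|_{L^2(\Omega)}$ and integrating over $t$ gives (\ref{aprioriU1}) with $k=0$; the analogous duality argument carried out in the $y$-variable on $Y$ produces (\ref{apriorivw1}) with $k=0$. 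The only genuine care point I see is the mismatch between $\S(\T_\Omega)\subset H^1_0(\Omega)$ and the nontrivial Dirichlet data carried by $U$; this is routinely handled by lifting $U_D$ via $U_D^*$ from (\ref{A5}) and running the above argument on $U-U_D^*$, which I expect to amount to bookkeeping rather than conceptual difficulty.
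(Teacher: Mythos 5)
Your proof takes essentially the same route as the paper's: both identify $\Re_U$ (resp.\ $\Re_v,\Re_w$) as the orthogonal (Ritz/elliptic) projection with respect to the $H^1(\Omega)$ inner product (resp.\ the $H^1(Y)$ inner product, pointwise in $x$ via the tensor-product structure), and reduce the claim to standard elliptic-projection error estimates applied pointwise in $t$ (and in $x$), followed by integration. The only difference is that the paper cites those estimates from Larsson--Thom\'ee as a black box while you unpack them (quasi-interpolation for $k=1$, Aubin--Nitsche duality for $k=0$); the boundary-data mismatch you flag at the end is a genuine point, but the paper's own proof glosses over it in exactly the same way, so your argument is at least as complete.
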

\begin{proof}
We prove (\ref{aprioriU1}) first. For a.e. $t\in S$ we have
\begin{align}
\nonumber
\langle U-\Re_U,s\rangle_{H^1(\Omega)}=0\quad{\rm for\,\, all}\quad s\in\S(\T_\Omega).
\end{align}
Then it follows from standard estimates on elliptic projections (see e.g. \cite[p.65]{LarssonThomee}) that for a.e. $t\in S$ there holds
\begin{align}
\nonumber
\|(U-\Re_U)(t)\|_{H^k(\Omega)}\le Ch^{2-k}\|U(t)\|_{H^2(\Omega)}\quad (k=0,1),
\end{align}
where $C$ is independent of $t$. Integrating the previous estimate gives (\ref{aprioriU1}).
Due to the tensor product structure of our spaces, the proof for $\Re_v,\Re_w$ is very similar; we sketch the proof for $\Re_v$. Since $v-\Re_v=e_v^2\in L^2(\Omega)\times\S(\T_Y)^\perp$, we have that $v-\Re_v$ is orthogonal to every function in $L^2(\Omega)\times\S(\T_Y)$. Thus, for every $\varphi\in L^2(\Omega)$, every $s\in\S(\T_Y)$ and a.e. $t\in S$ we have
\begin{align}
\nonumber
\langle v-\Re_v,\varphi s\rangle_{L^2(\Omega,H^1(Y))}=0.
\end{align}
It easily follows that for a.e. $x\in\Omega$ and a.e. $t\in S$
\begin{align}
\nonumber
\langle v(t,x,\cdot)-\Re_v(t,x,\cdot),s\rangle_{H^1(Y)}=0\quad{\rm for\,\,all}\quad s\in\S(\T_Y),
\end{align}
and we again use estimates on elliptic projections to obtain
\begin{align}
\nonumber
\|v(t,x,\cdot)-\Re_v(t,x,\cdot)\|_{H^k(Y)}\le Ch^{2-k}_Y\|v(t,x,\cdot)\|_{H^2(Y)}\quad(k=0,1),
\end{align}
where $C$ is independent of $t,x$. Integrating over $S\times\Omega$ yields (\ref{apriorivw1}).
\end{proof}

\subsection{Error of Galerkin approximation}
We need the following result on continuity with respect to data.
\begin{prop}
\label{continuityData}
Consider the $x$-dependent auxiliary system posed in $\Omega\times Y$
\begin{equation}
\label{contSystem}
\left\{\begin{array}{rcl}
\partial_t\ov-D_v\Delta_y\ov&=&-\eta(\ov,\ow)\\
\partial_t\ow-D_w\Delta_y\ow&=&-\eta(\ov,\ow)
\end{array} \right.
\end{equation}
with boundary conditions
\begin{equation}
\label{contBoundary}
\left\{\begin{array}{rcl}
-D_v\nabla_y\ov\cdot n_y&=&\a(\ov-\oU)\quad{\rm at}\quad S\times\Omega\times\Gamma^R\\
-D_v\nabla_y\ov\cdot n_y&=&0\quad{\rm at}\quad S\times\Omega\times\Gamma\setminus\Gamma^R\\
-D_w\nabla_y\ow\cdot n_y&=&0\quad{\rm at}\quad S\times\Omega\times\Gamma
\end{array} \right.
\end{equation}
and initial conditions
\begin{align}
\nonumber 
\ov(0,x,y)=\ov_I(x,y),\quad \ow(0,x,y)=\ow_I(x,y),\quad(x,y)\in\Omega\times Y
\end{align}
Assume that $\oU^1,\oU^2\in L^2(S,H^1(\Omega))$ and $(\ov^1,\ow^1),(\ov^2,\ow^2)$ are solutions to (\ref{contSystem}) with data $\oU^1,\oU^2$ respectively and the same initial conditions
\begin{align}
\nonumber 
\ov^1(0,x,y)=\ov^2(0,x,y),\quad \ow^1(0,x,y)=\ow^2(0,x,y).
\end{align}
Assume that $\ov_I,\ow_I\in L^2(\Omega,H^1(Y))$, then 
\begin{align}
\label{contEstimate1}
\|\ov^2-\ov^1\|^2_\cX +\|\ow^2-\ow^1\|^2_\cX \le C\|\oU^2-\oU^1\|^2_{L^2(S,L^2(\Omega))}
\end{align}
where $\cX=L^2(S,L^2(\Omega,H^1(Y)))$.
\end{prop}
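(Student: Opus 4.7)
The plan is the standard energy-Grönwall argument for a coupled parabolic system. Introduce the differences $V=\ov^2-\ov^1$, $W=\ow^2-\ow^1$ and $\mathcal{U}=\oU^2-\oU^1$. Subtracting the two copies of (\ref{contSystem})--(\ref{contBoundary}), one sees that $(V,W)$ solves a linear problem with source coming only from the Lipschitz increment of $\eta$, and with inhomogeneity $\mathcal{U}$ appearing solely through the Robin-type boundary term on $\Gamma^R$. Because the initial data agree, $V(0,\cdot,\cdot)\equiv W(0,\cdot,\cdot)\equiv 0$.

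First I would take $V$ (resp.\ $W$) as test function in the weak formulation of the equation for $V$ (resp.\ $W$) on $\Omega\times Y$, integrate by parts in $y$, and pick up the boundary contribution $\alpha\int_{\Omega\times\Gamma^R}(V-\mathcal{U})V\,d\sigma_y dx$ from the equation for $V$. This yields
\begin{align*}
\tfrac{1}{2}\tfrac{d}{dt}\|V\|_{L^2(\Omega\times Y)}^{2}
+D_v\|\nabla_y V\|_{L^2(\Omega\times Y)}^{2}
+\alpha\!\!\int_{\Omega\times\Gamma^R}\!\!V^{2}d\sigma_y dx
=\alpha\!\!\int_{\Omega\times\Gamma^R}\!\!\mathcal{U}V\,d\sigma_y dx
-\!\!\int_{\Omega\times Y}\!\![\eta(\ov^2,\ow^2)-\eta(\ov^1,\ow^1)]V,
\end{align*}
and an analogous identity for $W$ (without the boundary term).

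Next I would bound the right-hand side. For the boundary term I use (\ref{epsilonAMGM}) and exploit that $\mathcal{U}$ is independent of $y$, so $\int_{\Omega\times\Gamma^R}\mathcal{U}^2 d\sigma_y dx=d\sigma_y(\Gamma^R)\,\|\mathcal{U}\|_{L^2(\Omega)}^{2}$; choosing $\varepsilon$ small enough the $V^{2}$ part is absorbed into the $\alpha\int V^{2}d\sigma_y$ term on the left. The Lipschitz bound on $\eta$ (assumption (\ref{A4})) and (\ref{epsilonAMGM}) give
\[
\Bigl|\int_{\Omega\times Y}[\eta(\ov^2,\ow^2)-\eta(\ov^1,\ow^1)]V\Bigr|
\le C(\|V\|_{L^2(\Omega\times Y)}^{2}+\|W\|_{L^2(\Omega\times Y)}^{2}),
\]
and similarly when we test the $W$-equation. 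Adding the two identities and discarding the nonnegative boundary term yields
\[
\tfrac{d}{dt}\bigl(\|V\|^{2}+\|W\|^{2}\bigr)
+c\bigl(\|\nabla_y V\|^{2}+\|\nabla_y W\|^{2}\bigr)
\le C\bigl(\|V\|^{2}+\|W\|^{2}+\|\mathcal{U}\|_{L^2(\Omega)}^{2}\bigr),
\]
where all spatial norms are on $\Omega\times Y$. Since $V(0)=W(0)=0$, Grönwall's inequality gives a pointwise-in-$t$ bound $\|V(t)\|^{2}+\|W(t)\|^{2}\le C\|\mathcal{U}\|_{L^2(S,L^2(\Omega))}^{2}$, and a subsequent integration in $t\in S$ of the differential inequality absorbs the gradient terms into the left-hand side, producing the $\cX$-norm estimate (\ref{contEstimate1}).

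The only mildly delicate point is the treatment of the $\mathcal{U}V$-boundary integral: it is tempting to apply the interpolation-trace inequality (\ref{InterpolationTrace}) to $\mathcal{U}$, but $\mathcal{U}$ has no $y$-dependence, so the sharper observation that $\int_{\Gamma^R}d\sigma_y$ is a finite constant (using assumption (\ref{A2})) is what converts the boundary $L^{2}(\Gamma^R)$ norm of $\mathcal{U}$ directly into $\|\mathcal{U}\|_{L^2(\Omega)}$ without losing derivatives. Everything else is a bookkeeping exercise with (\ref{epsilonAMGM}), (\ref{InterpolationTrace}) (applied to $V$ if one prefers to recycle the boundary $V^{2}$ term through the gradient), and Grönwall.
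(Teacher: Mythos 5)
Your proposal is correct and follows essentially the same route as the paper: subtract the two weak formulations, test with the differences, handle the Robin term on $\Gamma^R$ via (\ref{epsilonAMGM}) together with the $y$-independence of $\mathcal{U}$ and the interpolation-trace inequality (\ref{InterpolationTrace}), control the reaction term by the Lipschitz continuity of $\eta$, and conclude by Gr\"onwall (using zero initial difference) followed by an integration in time to recover the gradient part of the $\cX$-norm. The only cosmetic difference is that you keep the nonnegative $\alpha\int_{\Omega\times\Gamma^R}V^2$ contribution on the left and absorb the cross term into it, while the paper moves the entire boundary term to the right and absorbs the resulting $\rho\|\nabla_y(\ov^2-\ov^1)\|^2$ into the diffusion term; both are standard and equivalent here.
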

\begin{proof}
The weak form of (\ref{contSystem}) is
\begin{align}
\nonumber
\int_{\Omega\times Y}\partial_t\ov\varphi+D_v\int_{\Omega\times Y}\nabla_y\ov\nabla_y\varphi-\a\int_{\Omega\times\Gamma^R}(\ov-\tU)\varphi d\sigma_ydx\\
\label{contWeakV}
=\int_{\Omega\times Y}\eta(\ov,\ow)\varphi\\
\label{contWeakW}
\int_{\Omega\times Y}\partial_t\ow\phi+D_w\int_{\Omega\times Y}\nabla_y\ow\nabla_y\phi
=\int_{\Omega\times Y}\eta(\ov,\ow)\phi
\end{align}
for any $(\varphi,\phi)\in L^2(\Omega,H^1(Y))^2$. Consider (\ref{contWeakV}) and (\ref{contWeakW}) for $\oU_1,\oU_2$ and set
\begin{align}
\nonumber
e_{\ov}:=\ov^2-\ov^1,\quad e_{\ow}:=\ow^2-\ow^1,\quad e_{\oU}:=\oU^2-\oU^1.
\end{align}
Subtracting the weak formulations, we obtain
\begin{align}
\nonumber
\int_{\Omega\times Y}\partial_t\ev\varphi+D_v\int_{\Omega\times Y}\nabla_y\ev\nabla_y\varphi-\a\int_{\Omega\times\Gamma^R}(\ev-\eu)\varphi d\sigma_ydx\\
\nonumber
=\int_{\Omega\times Y}[\eta(\ov^2,\ow^2)-\eta(\ov^1,\ow^1)]\varphi\\
\nonumber
\int_{\Omega\times Y}\partial_t\ew\phi+D_w\int_{\Omega\times Y}\nabla_y\ew\nabla_y\phi
=\int_{\Omega\times Y}[\eta(\ov^2,\ow^2)-\eta(\ov^1,\ow^1)]\phi.
\end{align}
Testing the previous equations with $(\varphi,\phi)=(\ev,\ew)$ yields
\begin{eqnarray}
\nonumber
\frac{d}{2dt}\left[\|\ev\|^2_{L^2(\Omega\times Y)}\right.&+&\left.\|\ew\|^2_{L^2(\Omega\times Y)}\right]+\\
\nonumber
+ D_v\|\nabla_y\ev\|^2_{L^2(\Omega\times Y)}&+& D_w\|\nabla_y\ew\|^2_{L^2(\Omega\times Y)}=\\
\label{contEstim2}
=\a\int_{\Omega\times\Gamma^R}(\ev-\eu)\ev d\sigma_ydx
&+&\int_{\Omega\times Y}\delta(\eta)(\ev+\ew),
\end{eqnarray}
where $\delta(\eta)=\eta(\ov^2,\ow^2)-\eta(\ov^1,\ow^1)$. We proceed to estimate the right-hand side of (\ref{contEstim2}). 

Using (\ref{InterpolationTrace}) with parameter $\rho>0$, we get
\begin{align}
\nonumber
\int_{\Omega\times\Gamma^R}|(\ev-\eu)\ev|d\sigma_ydx\le \int_{\Omega\times\Gamma^R}\left[(\ev-\eu)^2+\ev^2\right]d\sigma_ydx\\
\label{Cont3}
\le\rho\int_{\Omega\times Y}\nabla_y\ev^2+c_\rho\|\ev\|^2_{L^2(\Omega\times Y)}+c\|\eu\|^2_{L^2(\Omega)}.
\end{align}
Using the Lipschitz continuity of $\eta$, we have
\begin{align}
\label{Cont4}
|\delta(\eta)|\le C(|\ev|+|\ew|).
\end{align}
Hence, it holds
\begin{align}
\nonumber
\int_{\Omega\times Y}|\delta(\eta)(\ev+\ew)|\le C\left[\|\ev\|^2_{L^2(\Omega\times Y)}+\|\ew\|^2_{L^2(\Omega\times Y)}\right].
\end{align}
Putting (\ref{Cont3}) and (\ref{Cont4}) into (\ref{contEstim2}) and rearranging the terms, we obtain
\begin{eqnarray}
\nonumber
\frac{d}{2dt}\left[\|\ev\|^2_{L^2(\Omega\times Y)}\right.&+&\left.\|\ew\|^2_{L^2(\Omega\times Y)}\right]+\\
\nonumber
+(D_v-\rho)\|\nabla_y\ev\|^2_{L^2(\Omega\times Y)}&+& D_w\|\nabla_y\ew\|^2_{L^2(\Omega\times Y)}\le\\
\label{contEstim5}
\le C\left[\|\eu\|^2_{L^2(\Omega)}\right.&+&\left.\|\ev\|^2_{L^2(\Omega\times Y)}+\|\ew\|^2_{L^2(\Omega\times Y)}\right]
\end{eqnarray}
Chose $\rho=D_v/2$ in (\ref{contEstim5}) and define
\begin{align}
\nonumber
\Xi(t):=\|\ev\|^2_{L^2(\Omega\times Y)}+\|\ew\|^2_{L^2(\Omega\times Y)}
\end{align}
and
\begin{align}
\nonumber
\mu(t)=D_v\|\nabla_y\ev\|^2_{L^2(\Omega\times Y)}+D_w\|\nabla_y\ew\|^2_{L^2(\Omega\times Y)},
\end{align}
then (\ref{contEstim5}) can be written as
\begin{align}
\nonumber
\Xi'(t)+\mu(t)\le C\Xi(t)+C\|\eu(t)\|^2_{L^2(\Omega)}.
\end{align}
Since $\mu(t)>0$, Gr\"{o}nwall's inequality and the fact that $\Xi(0)=0$ implies that
\begin{align}
\nonumber
\Xi(t)\le C\int_0^T\|\eu(s)\|^2_{L^2(\Omega)}ds.
\end{align}
Further, we get
\begin{align}
\nonumber
\mu(t)\le C\int_0^T\|\eu(s)\|^2_{L^2(\Omega)}ds+C\|\eu(t)\|^2_{L^2(\Omega)}.
\end{align}
Using the above observations, we finally obtain
\begin{align}
\nonumber
\|\ev\|^2_\cX +\|\ew\|^2_\cX\le C\int_0^T[\Xi(t)+\mu(t)]dt\le C\int_0^T\|\eu(t)\|^2_{L^2(\Omega)}dt,
\end{align}
which concludes the proof.
\end{proof}
\begin{rem}
\label{contExistence}
Existence of weak/strong solutions to (\ref{contSystem}) is obtained in the same way as for (\ref{system}), see \cite{MunteanNeussRadu}, while uniqueness is a direct outcome of (\ref{contEstimate1}).
\end{rem}

The next theorem states that the microscopic errors can be estimated by the macroscopic error.
\begin{teo}
\label{micromacroError} 
Given partitions $\T_\Omega,\T_Y$, there exists an absolute constant $c^\star$ such that
\begin{align}
\label{mainErrorEstimate}
\|e_v\|^2_\cX+\|e_w\|^2_\cX\leq c^\star\|e_U\|_{L^2(S,L^2(\Omega))}^2+\O(h^2_Y).
\end{align}
\end{teo}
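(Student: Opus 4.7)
My plan is to interpose between $(v,w)$ and the Galerkin pair $(\tv,\tw)$ an auxiliary pair $(\ov,\ow)$ driven by the already-computed macroscopic Galerkin solution $\tU$; this decouples the two scales and allows me to estimate each block separately. Concretely, let $(\ov,\ow)$ be the weak solution of the $x$-parametrised system (\ref{contSystem})-(\ref{contBoundary}) with macroscopic input $\oU:=\tU$ and initial data $(\ov(0),\ow(0))=(v_I,w_I)$; existence and uniqueness follow from Remark \ref{contExistence}. Since $(v,w)$ also satisfies exactly the same auxiliary system, only with input $\oU=U$, Proposition \ref{continuityData} applied with $(\oU^1,\oU^2)=(U,\tU)$ immediately gives
\begin{align*}
\|v-\ov\|^2_\cX+\|w-\ow\|^2_\cX\le C\|e_U\|^2_{L^2(S,L^2(\Omega))}.
\end{align*}

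For the complementary block $(\ov-\tv,\ow-\tw)$, a direct comparison of the Galerkin equations (\ref{GalV})-(\ref{GalW}) with the weak form (\ref{contWeakV})-(\ref{contWeakW}) at $\oU=\tU$ shows that, since both carry the same macroscopic datum, $(\tv,\tw)$ is precisely the Galerkin approximation of $(\ov,\ow)$ in $\S(\T_\Omega)\otimes\S(\T_Y)$. A regularity lift analogous to Theorem \ref{highRegularity} — valid because $x$ enters the auxiliary system only as a parameter through the smooth input $\tU$, no $x$-derivatives being present — places $(\ov,\ow)$ in $[L^2(S,L^2(\Omega,H^2(Y)))]^2$, and so Proposition \ref{RieszProjections} with $(v,w)$ replaced by $(\ov,\ow)$ supplies an elliptic projection whose $y$-residual is $\O(h_Y)$ in $\cX$ and $\O(h^2_Y)$ in $L^2(S,L^2(\Omega\times Y))$. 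A standard semi-discrete parabolic energy estimate then closes the argument: split $\ov-\tv$ into projection residual plus Galerkin-space term, test with the latter, handle the $\Gamma^R$-coupling through the interpolation-trace inequality (\ref{InterpolationTrace}) with a small parameter $\rho<D_v$ exactly as in (\ref{Cont3}), absorb the reaction term via the Lipschitz bound on $\eta$, and conclude with Gr\"onwall's inequality together with zero initial error to obtain
\begin{align*}
\|\ov-\tv\|^2_\cX+\|\ow-\tw\|^2_\cX=\O(h^2_Y).
\end{align*}
Adding the two bounds via the triangle inequality in $\cX$ then yields (\ref{mainErrorEstimate}) with $c^\star$ independent of the partitions.

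I expect the Galerkin error bound for $(\ov-\tv,\ow-\tw)$ to be the main obstacle: no single ingredient is new, but three error sources must be juggled simultaneously — the time-derivative of the elliptic projection residual (treated either by differentiating the projection in $t$, which requires enough regularity of $\partial_t\ov,\partial_t\ow$, or by a duality argument), the nonconforming surface coupling across $\Gamma^R$ (absorbed into the $D_v$-diffusion by (\ref{InterpolationTrace}) in the spirit of (\ref{Cont3})), and the nonlinear reaction (controlled via Lipschitz continuity of $\eta$ and Gr\"onwall). A secondary subtlety is verifying that the regularity lift for $(\ov,\ow)$ depends on $\tU$ only through norms already bounded by the data of (\ref{system}), so that the hidden constant in the $\O(h^2_Y)$ term remains uniform under mesh refinement.
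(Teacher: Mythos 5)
Your proposal is correct and follows essentially the same route as the paper: interpose the auxiliary system (\ref{contSystem}) driven by $\tU$, control the first block by Proposition \ref{continuityData}, and treat the second block as a standard Galerkin error analysis for the auxiliary problem via the elliptic projection of Proposition \ref{RieszProjections}, the interpolation-trace inequality (\ref{InterpolationTrace}), the Lipschitz bound on $\eta$, and Gr\"onwall. The only slip is the claim of ``zero initial error'' for $\ov-\tv$: since $\ov(0)=v_I$ while $\tv(0)$ is the Galerkin projection of $v_I$, the initial error is not zero but is $\O(h_Y^2)$ by assumption (\ref{A7}), which is exactly how the paper absorbs it and does not affect the conclusion.
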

\begin{rem}
Note that $\|v-\Re_v\|^2_\cX+\|w-\Re_w\|^2_\cX=\O(h_Y^2)$ does not imply $\|e_v\|^2_\cX+\|e_w\|^2_\cX=\O(h_Y^2)$ immediately. Indeed, $\Re_v\neq\tv, \Re_w\neq\tw$.
\end{rem}
\begin{proof}[Proof of Proposition \ref{micromacroError}]

Let $\oU^2=U$ be the weak macroscopic solution of (\ref{system}) and $\oU^1=\tU$ be the Galerkin projection subordinate to $\T_\Omega$. Solving (\ref{contSystem}) with data $\oU^1$ and $\oU^2$ and initial conditions $(v_I,w_I)$ in both cases, we obtain solutions $(\ov^1,\ow^1)$ and $(\ov^2,\ow^2)$ respectively. By uniqueness of the weak solution to (\ref{contSystem}) (see Remark \ref{contExistence} above), we have $(\ov^2,\ow^2)=(v,w)$.
Note also that in general $(\ov^1,\ow^1)\neq(\tv,\tw)$. 

By Proposition \ref{continuityData}
\begin{align}
\nonumber
\|v-\ov^1\|_\cX^2+\|w-\ow^1\|_\cX^2\le C\|U-\tU\|^2_{L^2(S,L^2(\Omega))}.
\end{align}
Hence, 
\begin{align}
\nonumber
\|v-\tv\|_\cX^2+\|w-\tw\|_\cX^2\le\|v-\ov^1\|_\cX^2+\|\ov^1-\tv\|_\cX^2+\|w-\ow^1\|_\cX^2\\
\nonumber
+\|\ow^1-\tw\|_\cX^2
\le\|\ov^1-\tv\|_\cX^2+\|\ow^1-\tw\|_\cX^2+C\|U-\tU\|_{L^2(S,L^2(\Omega))}^2.
\end{align}
We must estimate $\|\ov^1-\tv\|_\cX, \|\ow^1-\tw\|_\cX$. 
For all $(\psi,\phi)\in [\S(\T_\Omega)\times\S(\T_Y)]^2$, we have
\begin{align}
\nonumber
\int_{\Omega\times Y}\partial_t(\ov^1-\tv)\psi+D_v\int_{\Omega\times Y}\nabla_y(\ov^1-\tv)\nabla_y\psi-\a\gamma\int_{\Omega\times\Gamma^R}(\ov^1-\tv)\psi d\sigma_ydx\\
\label{mainErrorEstimate2}
=\int_{\Omega\times Y}[\eta(\ov^1,\ow^1)-\eta(\tv,\tw)]\psi.
\end{align}
and
\begin{align}
\nonumber
\int_{\Omega\times Y}\partial_t(\ow^1-\tw)\phi+D_w\int_{\Omega\times Y}\nabla_y(\ow^1-\tw)\nabla_y\phi\\
\label{mainErrorEstimate3}
=\int_{\Omega\times Y}[\eta(\ov^1,\ow^1)-\eta(\tv,\tw)]\phi.
\end{align}
Let $\Re_{\ov^1},\Re_{\ow^1}\in\S(\T_\Omega)\times\S(\T_Y)$ be the functions given by Proposition \ref{RieszProjections} associated to $\ov^1,\ow^1$. We shall test the equations (\ref{mainErrorEstimate2}) and (\ref{mainErrorEstimate3}) with $(\psi,\phi)=(\Re_{\ov^1}-\tv,\Re_{\ow^1}-\tw)$. Note that we have
\begin{align}
\nonumber
\int_{\Omega\times Y}\partial_t(\ov^1-\tv)(\Re_{\ov^1}-\tv)=\frac{d}{2dt}\|\ov^1-\tv\|^2_{L^2(\Omega\times Y)}\\
\nonumber
-\int_{\Omega\times Y}\partial_t(\ov^1-\tv)(\ov^1-\Re_{\ov^1}),
\end{align}
\begin{align}
\nonumber
\int_{\Omega\times Y}\nabla_y(\ov^1-\tv)\nabla_y(\Re_{\ov^1}-\ov)=\|\nabla_y(\ov^1-\tv)\|^2_{L^2(\Omega\times Y)}\\
\nonumber
-\int_{\Omega\times Y}\nabla_y(\ov^1-\tv)\nabla_y(\ov^1-\Re_{\ov^1}).
\end{align}
and the same for the equation related to $\ow^1$. 
By adding the equations and rearranging, we obtain
\begin{align}
\nonumber
\frac{d}{2dt}\left(\|\ov^1-\tv\|^2_{L^2(\Omega\times Y)}+\|\ow^1-\tw\|^2_{L^2(\Omega\times Y)}\right)+D_v\|\nabla_y(\ov^1-\tv)\|^2_{L^2(\Omega\times Y)}\hspace{1.0cm} \\
\nonumber
+D_w\|\nabla_y(\ow^1-\tw)\|^2_{L^2(\Omega\times Y)}\le\int_{\Omega\times Y}|\partial_t(\ov^1-\tv)||\ov^1-\Re_{\ov^1}|\hspace{2.0cm}\\
\nonumber
+\int_{\Omega\times Y}|\partial_t(\ow^1-\tw)||\ow^1-\Re_{\ow^1}|
+D_v\int_{\Omega\times Y}|\nabla_y(\ov^1-\tv)||\nabla_y(\ov^1-\Re_{\ov^1})|\hspace{1.0cm}\\
\nonumber
+D_w\int_{\Omega\times Y}|\nabla_y(\ow^1-\tw)||\nabla_y(\ow^1-\Re_{\ow^1})|
+\a\gamma\int_{\Omega\times\Gamma^R}|\ov^1-\tv||\Re_{\ov^1}-\tv| d\sigma_ydx\\
\nonumber
+\int_{\Omega\times Y}|\eta(\ov^1,\ow^1)-\eta(\tv,\tw)|(|\Re_{\ov^1}-\tv|+|\Re_{\ow^1}-\tw|)\hspace{3.0cm}\\
\label{estim0}
=I_1+I_2+I_3+I_4+I_5+I_6\hspace{7.0cm}
\end{align}
We proceed to estimate $I_1,...,I_6$. Clearly we have
\begin{align}
\label{estim1}
I_1\le\|\partial_t(\ov^1-\tv)\|_{L^2(\Omega\times Y)}\|\ov^1-\Re_{\ov^1}\|_{L^2(\Omega\times Y)}
\end{align}
and
\begin{align}
\label{estim12}
I_2\le\|\partial_t(\ow^1-\tw)\|_{L^2(\Omega\times Y)}\|\ow^1-\Re_{\ow^1}\|_{L^2(\Omega\times Y)}.
\end{align}
Using (\ref{epsilonAMGM}) with parameter $\rho>0$ and (\ref{apriorivw1}), we get
\begin{align}
\nonumber
I_3=D_v\int_{\Omega\times Y}|\nabla_y(\ov^1-\tv)||\nabla_y(\ov^1-\Re_{\ov^1})|\le\\
\label{estim2}
\le D_v\rho\|\nabla_y(\ov^1-\tv)\|^2_{L^2(\Omega\times Y)}+c_\rho\|\nabla_y(\ov^1-\Re_{\ov^1})\|^2_{L^2(\Omega\times Y)}.
\end{align}
By the same token,
\begin{align}
\nonumber
I_4=D_w\int_{\Omega\times Y}|\nabla_y(\ow^1-\tw)||\nabla_y(\ow^1-\Re_{\ow^1})|\le\\
\label{estim22}
\le D_w\rho\|\nabla_y(\ow^1-\tw)\|^2_{L^2(\Omega\times Y)}+c_\rho\|\nabla_y(\ow^1-\Re_{\ow^1})\|^2_{L^2(\Omega\times Y)}
\end{align}
Further, (\ref{InterpolationTrace}) with parameter $\rho>0$ leads to the following estimates
\begin{align}
\nonumber
I_5=\a\gamma\int_{\Omega\times\Gamma^R}|\ov^1-\tv||\Re_{\ov^1}-\tv| d\sigma_ydx\hspace{4.0cm}\\
\nonumber
\le\a\gamma\int_{\Omega\times\Gamma^R}(\ov^1-\tv)^2d\sigma_ydx+\a\gamma\int_{\Omega\times\Gamma^R}|\ov^1-\tv||\ov^1-\Re_{\ov^1}|d\sigma_ydx\\
\nonumber
\le\frac{3\a\gamma}{2}\int_{\Omega\times\Gamma^R}\left[(\ov^1-\tv)^2+(\ov^1-\Re_{\ov^1})^2\right]d\sigma_ydx\hspace{3.0cm}\\
\label{estim3}
\le C\left[\rho\|\nabla_y(\ov^1-\tv)\|^2_{L^2(\Omega\times Y)}+\|\ov^1-\tv\|^2_{L^2(\Omega\times Y)}+\|\ov^1-\Re_{\ov^1}\|^2_{L^2(\Omega, H^1(Y))}\right]
\end{align}
Finally, using the Lipschitz continuity of $\eta$, it is not difficult to see that
\begin{align}
\nonumber
I_6\le C\left(\|\ov^1-\tv\|^2_{L^2(\Omega\times Y)}+\|\ow^1-\tw\|^2_{L^2(\Omega\times Y)}\right)+\\
\label{estim4}
+C\left(\|\ov^1-\Re_{\ov^1}\|^2_{L^2(\Omega\times Y)}+\|\ow^1-\Re_{\ow^1}\|^2_{L^2(\Omega\times Y)}\right).
\end{align}
Set
\begin{align}
\nonumber
\Xi(t):=\|\ov^1-\tv\|^2_{L^2(\Omega\times Y)}+\|\ow^1-\tw\|^2_{L^2(\Omega\times Y)},
\end{align}
and
\begin{align}
\nonumber
\Upsilon(t):=D_v\|\nabla_y(\ov^1-\tv)\|^2_{L^2(\Omega\times Y)}+D_w\|\nabla_y(\ow^1-\tw)\|^2_{L^2(\Omega\times Y)}.
\end{align}
Taking all estimates (\ref{estim0})-(\ref{estim4}) into consideration, we obtain the estimate
\begin{align}
\nonumber
\Xi'(t)+2\Upsilon(t)\le C_1\Xi(t)+I_1+I_2+C_2\rho\Upsilon(t)+\\
\nonumber
+C_3\left(\|\ov^1-\Re_{\ov^1}\|^2_{L^2(\Omega,H^1(Y))}+\|\ow^1-\Re_{\ow^1}\|^2_{L^2(\Omega,H^1(Y))}\right).
\end{align}
Choosing $\rho=1/C_2$ and denoting
\begin{align}
\nonumber
\mu(t):=I_1+I_2+C_3\left(\|\ov^1-\Re_{\ov^1}\|^2_{L^2(\Omega,H^1(Y))}+\|\ow^1-\Re_{\ow^1}\|^2_{L^2(\Omega,H^1(Y))}\right),
\end{align}
we obtain
\begin{align}
\nonumber
\Xi'(t)+\Upsilon(t)\le C_1\Xi(t)+\mu(t).
\end{align}
By Gr\"{o}nwall's inequality, we get
\begin{align}
\nonumber
\Xi(t)\le C\left(\Xi(0)+\int_0^T\mu(s)ds\right).
\end{align}
By (\ref{L^2TimeDerivative}) below, we have
\begin{align}
\nonumber
\|\partial_t(\ov^1-\tv)\|_{L^2(S,L^2(\Omega\times Y))}+\|\partial_t(\ow^1-\tw)\|_{L^2(S,L^2(\Omega\times Y))}\le C.
\end{align}
This together with (\ref{apriorivw1}) yields
\begin{align}
\nonumber
\int_0^T (I_1+I_2)ds\le\|\partial_t(\ov^1-\tv)\|_{L^2(S,L^2(\Omega\times Y))}\|\ov^1-\Re_{\ov^1}\|_{L^2(S,L^2(\Omega\times Y))}\\
\nonumber
+\|\partial_t(\ow^1-\tw)\|_{L^2(S,L^2(\Omega\times Y))}\|\ow^1-\Re_{\ow^1}\|_{L^2(S,L^2(\Omega\times Y))}\le Ch_Y^2.
\end{align}
Thus, we are lead to
\begin{align}
\nonumber
\int_0^T\mu(s)ds\le Ch_Y^2,
\end{align}
and we obtain
\begin{align}
\nonumber
\Xi(t)\le C\left[\Xi(0)+h_Y^2\right].
\end{align}
Using the previous two inequalities, we also get
\begin{align}
\nonumber
\int_0^T\Upsilon(t)dt\le C\int_0^T\Xi(t)+\mu(t)dt\le C\Xi(0)+Ch_Y^2.
\end{align}
Consequently,
\begin{align}
\nonumber
\|\ov^1-\tv\|^2_\cX+\|\ow^1-\tw\|^2_\cX\le C\Xi(0)+Ch_Y^2.
\end{align}
Now, since $\ov^1(0,x,y)=v(0,x,y)=v_I(x,y)$ and $\ow^1(0,x,y)=w(0,x,y)=w_I(x,y)$, it follows that $\Xi(0)$ can be incorporated in the $\O(h_Y^2)$ term since we assume $\tv,\tw$ approximate $v_I,w_I$ well. 
\end{proof}

\begin{rem}
Our final result in this section shows that we can control the error in $U$ by its projection onto the orthogonal complement $\S(\T_\Omega)^\perp$.
\end{rem}
\begin{prop}
\label{UErrorProp} 
Let $\T_\Omega,\T_Y$ be arbitrary partitions and set
\begin{align}
\nonumber
\varepsilon_U:=\max\left\{\|e^2_U\|_{L^2(S,H^1(\Omega))},\|e^2_U\|^2_{L^2(S,H^1(\Omega))}\right\}.
\end{align}
Then
\begin{align}
\label{errLimit0}
\|e_U\|_{L^2(S,H^1(\Omega))}\le C_U\sqrt{\varepsilon_U}+C\|e_U^1(0)\|_{L^2(\Omega)}.
\end{align}
\end{prop}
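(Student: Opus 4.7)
My plan is to derive an energy estimate for the ``in-space'' component $e_U^1$ by testing the error equation for $e_U=U-\tU$ against $e_U^1\in\S(\T_\Omega)$, to exploit the $H^1(\Omega)$-orthogonality of the decomposition $e_U=e_U^1+e_U^2$, and then to close the resulting differential inequality using Theorem~\ref{micromacroError} to absorb the microscopic error $\|e_v\|^2_\cX$. Combining the resulting control of $\|e_U^1\|_{L^2(S,H^1(\Omega))}$ with the direct bound $\|e_U^2\|_{L^2(S,H^1(\Omega))}\le\sqrt{\varepsilon_U}$ (coming from the $\max$ in the definition of $\varepsilon_U$) will then produce (\ref{errLimit0}).

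Concretely, I would first subtract (\ref{GalU}) from (\ref{weakU}) to obtain the error equation
\begin{align*}
\int_\Omega\partial_t e_U\,\varphi+D_U\int_\Omega\nabla e_U\cdot\nabla\varphi=\gamma\alpha\int_{\Omega\times\Gamma^R}(e_v-e_U)\varphi\,d\sigma_y dx
\end{align*}
valid for every $\varphi\in\S(\T_\Omega)$. Taking $\varphi=e_U^1$ and inserting $e_U=e_U^1+e_U^2$, the $H^1$-orthogonality $\int_\Omega e_U^2 e_U^1+\int_\Omega\nabla e_U^2\cdot\nabla e_U^1=0$ converts $D_U\int_\Omega\nabla e_U^2\cdot\nabla e_U^1$ into $-D_U\int_\Omega e_U^2 e_U^1$, while the time derivative splits as $\tfrac12\tfrac{d}{dt}\|e_U^1\|^2_{L^2(\Omega)}+\int_\Omega\partial_t e_U^2\cdot e_U^1$.

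For the right-hand side, the contribution $\int_{\Omega\times\Gamma^R}e_U e_U^1\,d\sigma_y dx$ collapses to $d\sigma_y(\Gamma^R)\int_\Omega e_U e_U^1\,dx$ (both factors are $y$-independent), which via $e_U=e_U^1+e_U^2$ and (\ref{epsilonAMGM}) yields $\|e_U^1\|^2$ plus an absorbable cross term. The contribution $\int_{\Omega\times\Gamma^R}e_v e_U^1\,d\sigma_y dx$ is handled by combining (\ref{InterpolationTrace}) on $e_v$ with (\ref{epsilonAMGM}), producing a term of the form $\rho\|\nabla_y e_v\|^2_{L^2(\Omega\times Y)}+c_\rho\|e_v\|^2_{L^2(\Omega\times Y)}+c\|e_U^1\|^2_{L^2(\Omega)}$. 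After choosing $\rho$ small enough to absorb all $\|\nabla e_U^1\|^2$-type terms into $D_U\|\nabla e_U^1\|^2_{L^2(\Omega)}$ on the left-hand side, integrating in time and applying Gr\"{o}nwall, I arrive at an estimate of the form
\begin{align*}
\|e_U^1\|^2_{L^2(S,H^1(\Omega))}\le C\bigl[\|e_U^1(0)\|^2_{L^2(\Omega)}+\|e_U^2\|^2_{L^2(S,H^1(\Omega))}+\|\partial_t e_U^2\|^2_{L^2(S,L^2(\Omega))}+\|e_v\|^2_\cX\bigr].
\end{align*}
At this point Theorem~\ref{micromacroError} lets me dominate $\|e_v\|^2_\cX$ by $c^\star(\|e_U^1\|^2_{L^2(S,L^2)}+\|e_U^2\|^2_{L^2(S,L^2)})+\O(h_Y^2)$, after which a second application of Gr\"{o}nwall absorbs the $\|e_U^1\|^2_{L^2(S,L^2)}$ contribution back into the left-hand side. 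Finally, $\|e_U\|_{L^2(S,H^1)}\le\|e_U^1\|_{L^2(S,H^1)}+\|e_U^2\|_{L^2(S,H^1)}$ together with $\sqrt{\varepsilon_U}\ge\|e_U^2\|_{L^2(S,H^1)}$ (valid in both regimes thanks to the $\max$) delivers (\ref{errLimit0}) after taking a square root.

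The main obstacle I anticipate is the cross term $\int_\Omega\partial_t e_U^2\cdot e_U^1$: since $e_U^2=U-\Re_U$, a bound on $\|\partial_t e_U^2\|_{L^2(S,L^2(\Omega))}$ is needed, which requires differentiating the elliptic projection in time and invoking the regularity of $\partial_t U$ granted by Theorem~\ref{highRegularity} together with the estimate (\ref{L^2TimeDerivative}) already used at the end of the proof of Theorem~\ref{micromacroError}. The fact that this term naturally enters with a different power from the purely spatial terms (after Cauchy--Schwarz in time) is, I suspect, precisely why $\varepsilon_U$ is defined as a $\max$ of the linear and quadratic norms of $e_U^2$: the square root of the final energy inequality must dominate $\|e_U^2\|_{H^1}$ in both the small- and large-error regimes, and $\sqrt{\varepsilon_U}$ is the common upper bound.
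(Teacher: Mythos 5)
Your proposal follows essentially the same route as the paper's proof: test the subtracted error equation with $\varphi=e_U^1$, exploit the $H^1$-orthogonality of $e_U^1$ and $e_U^2$, treat the interface term with (\ref{epsilonAMGM}) and (\ref{InterpolationTrace}), apply Gr\"{o}nwall, and invoke Theorem~\ref{micromacroError} to absorb $\rho\|e_v\|^2_\cX$ into the left-hand side, with the $\max$ in $\varepsilon_U$ arising from the time-derivative cross term entering linearly in $\|e_U^2\|$.

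One caveat: your displayed intermediate inequality carries $\|\partial_t e_U^2\|^2_{L^2(S,L^2(\Omega))}$ as a separate \emph{additive} term, and that version cannot yield (\ref{errLimit0}), since this quantity is not dominated by $C_U\sqrt{\varepsilon_U}+C\|e_U^1(0)\|_{L^2(\Omega)}$. The cross term must instead be kept as the product $\|\partial_t e_U^{\cdot}\|_{L^2(S,L^2(\Omega))}\,\|e_U^2\|_{L^2(S,L^2(\Omega))}$ after Cauchy--Schwarz in time, with the time-derivative factor folded into the constant $C_U$ --- exactly what the paper does (it uses $\partial_t e_U^1$ rather than your $\partial_t e_U^2$, setting $C_U=C[1+\|\partial_t e_U^1\|_{L^2(S,L^2(\Omega))}]$), and what your own closing paragraph correctly identifies as the origin of the $\max$ in $\varepsilon_U$. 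With that correction your argument matches the paper's.
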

\begin{proof}
Subtracting (\ref{GalU}) from (\ref{weakU}) we obtain
\begin{align}
\nonumber
\int_\Omega\partial_te_U\varphi+D_U\int_\Omega\nabla e_U\nabla\varphi
=\gamma\a\int_{\Omega\times\Gamma^R}(e_v-e_U)\varphi d\sigma_ydx
\end{align}
for all $\varphi\in\S(\T_\Omega)$. Taking $\varphi=e_U^1=U-\tU-e_U^2$ gives
\begin{align}
\nonumber
\int_\Omega\partial_te_Ue_U^1+D_U\int_\Omega\nabla e_U\nabla(e_U-e_U^2)=\frac{d}{2dt}\|e^1_U\|^2_{L^2(\Omega)}\\
+\int_\Omega\partial_te^1_Ue_U^2+D_U\|\nabla e_U\|^2_{L^2(\Omega)}-D_U\|e_U^2\|^2_{L^2(\Omega)},
\end{align}
where we used that $\partial_te_U^1\in\S(\T_\Omega)$ and $e_U^2\in\S(\T_\Omega)^\perp$.
Hence,
\begin{align}
\nonumber
\frac{d}{dt}\|e^1_U\|^2_{L^2(\Omega)}+2D_U\|\nabla e_U\|^2_{L^2(\Omega)}\le2\int_{\Omega}|\partial_te_U^1e_U^2|\\
\label{UErrorProp1}
+2D_U\|\nabla e_U^2|^2_{L^2(\Omega)}+2\gamma\a\int_{\Omega\times\Gamma^R}|e_v-e_U||e_U^1|d\sigma_ydx.
\end{align}
For the first term of (\ref{UErrorProp1}), we have 
\begin{align}
\label{UErrorProp2}
\int_{\Omega}|\partial_te_U^1e_U^2|\le\|\partial_te_U^1\|_{L^2(\Omega)}\|e_U^2\|_{L^2(\Omega)}
\end{align}
Further, the second term of (\ref{UErrorProp1}) can be estimated by applying again (\ref{epsilonAMGM}) with parameter $\rho>0$ and (\ref{InterpolationTrace})
\begin{align}
\nonumber
\int_{\Omega\times\Gamma^R}|e_v-e_U||e_U^1|d\sigma_ydx\le \rho\int_{\Omega\times\Gamma^R}(e_v-e_U)d\sigma_ydx+c_\rho\|e_U^1\|^2_{L^2(\Omega)}\\
\nonumber
\le C_1\rho\|e_v\|^2_{L^2(\Omega,H^1(Y))}+C\left(\|e_U^1\|^2_{L^2(\Omega)}+\|e_U\|^2_{L^2(\Omega)}\right)\\
\label{UErrorProp3}
\le C_1\rho\|e_v\|^2_{L^2(\Omega,H^1(Y))}+C\left(\|e_U^1\|^2_{L^2(\Omega)}+\|e^2_U\|^2_{L^2(\Omega)}\right),
\end{align}
where we used the inequality $\|e_U\|_{L^2(\Omega)}\le\|e_U^1\|_{L^2(\Omega)}+\|e^2_U\|_{L^2(\Omega)}$

By (\ref{UErrorProp2}) and (\ref{UErrorProp3}) we rearrange (\ref{UErrorProp1}) to
\begin{align}
\nonumber
\frac{d}{dt}\|e^1_U\|^2_{L^2(\Omega)}+D_U\|\nabla e_U\|^2_{L^2(\Omega)}\le C\|e_U^1\|^2_{L^2(\Omega)}\\
\nonumber
+C_1\rho\|e_v\|^2_{L^2(\Omega,H^1(Y))}+C\|e_U^2\|^2_{H^1(\Omega)}+\|\partial_te_U^1\|_{L^2(\Omega)}\|e_U^2\|_{L^2(\Omega)}.
\end{align}
Set
\begin{align}
\nonumber
\mu(t):=C_1\rho\|e_v(t)\|^2_{L^2(\Omega,H^1(Y))}+\|e_U^2(t)\|^2_{H^1(\Omega)}+\|\partial_te_U^1(t)\|_{L^2(\Omega)}\|e_U^2(t)\|_{L^2(\Omega)},
\end{align}
then
\begin{align}
\nonumber
\frac{d}{dt}\|e^1_U\|^2_{L^2(\Omega)}\le C\|e_U^1(t)\|^2_{L^2(\Omega)}+\mu(t),
\end{align}
and, by Gr\"{o}nwall's inequality, we have for all $t\in[0,T]$
\begin{align}
\label{UErrorProp4}
\|e_U^1(t)\|^2_{L^2(\Omega)}\le C\left(\|e^1_U(0)\|^2_{L^2(\Omega)}+\int_0^T\mu(s)ds\right).
\end{align}
From (\ref{UErrorProp4}), we first get 
\begin{align}
\nonumber
D_U\|\nabla e_U(t)\|^2_{L^2(\Omega)}\le C\|e_U^1(t)\|^2_{L^2(\Omega)}+\mu(t)\\
\label{UErrorProp5}
\le\mu(t)+C\left(\|e^1_U(0)\|^2_{L^2(\Omega)}+\int_0^T\mu(s)ds\right).
\end{align}
It also follows from (\ref{UErrorProp4}) that
\begin{align}
\nonumber
\|e_U(t)\|^2_{L^2(\Omega)}\le\|e_U^1(t)\|^2_{L^2(\Omega)}+\|e_U^2(t)\|^2_{L^2(\Omega)}\\
\label{UErrorProp6}
\le \|e_U^2(t)\|^2_{L^2(\Omega)}+C\left(\|e^1_U(0)\|^2_{L^2(\Omega)}+\int_0^T\mu(s)ds\right).
\end{align}
Adding (\ref{UErrorProp5}) and  (\ref{UErrorProp6}) integrating over $[0,T]$ yields
\begin{align}
\nonumber
\|e_U\|^2_{L^2(S,H^1(\Omega))}\le C\|e_U^1(0)\|^2_{L^2(\Omega)}+C\|e_U^2\|^2_{L^2(S,L^2(\Omega))}\\
C\int_0^T\mu(t)dt\le
C\|e_U^1(0)\|^2_{L^2(\Omega)}+C\|e_U^2\|^2_{L^2(S,H^1(\Omega))}
\\
\label{UErrorProp7}
+C\rho\|e_v\|^2_{L^2(S,L^2(\Omega.H^1(Y)))}
+C\int_0^T\|\partial_te_U^1(t)\|_{L^2(\Omega)}\|e_U^2(t)\|_{L^2(\Omega)}dt.
\end{align}
By (\ref{mainErrorEstimate}), we may choose $\rho$ sufficiently small to have 
\begin{align}
\nonumber
\rho\|e_v\|^2_{L^2(S,L^2(\Omega.H^1(Y)))}\le\frac{1}{2}\|e_U\|^2_{L^2(S,H^1(\Omega))},
\end{align}
then we get from (\ref{UErrorProp7})
\begin{align}
\nonumber
\|e_U\|^2_{L^2(S,H^1(\Omega))}\le C\|e_U^1(0)\|^2_{L^2(\Omega)}+C\|e_U^2\|^2_{L^2(S,L^2(\Omega))}+\\
\nonumber
+C\int_0^T\|\partial_te_U^1(t)\|_{L^2(\Omega)}\|e_U^2(t)\|_{L^2(\Omega)}dt\\
\nonumber
\le C\|e_U^1(0)\|^2_{L^2(\Omega)}+C\|e_U^2\|^2_{L^2(S,L^2(\Omega))}+\|\partial_te_U^1\|_{L^2(S,L^2(\Omega))}\|e_U^2\|_{L^2(S,L^2(\Omega))}\\
\nonumber
\le C\|e_U^1(0)\|^2_{L^2(\Omega)}+C_U\max\{\|e_U^2\|_{L^2(S,L^2(\Omega))},\|e_U^2\|^2_{L^2(S,L^2(\Omega))}\}
\end{align}
where
\begin{align}
\nonumber
C_U=C[1+\|\partial_te_U^1\|_{L^2(S,L^2(\Omega))}].
\end{align}
This concludes the proof.
\end{proof}

\section{Feedback convergence}

\subsection{Dyadic partitions}
For the sake of simplicity, we assume in this section that $\Omega=[0,1]^2$. A \emph{dyadic square} in $\Omega$ is a set of the form 
\begin{align}
\nonumber
[i2^{-m},(i+1)2^{-m}]\times[j2^{-m},(j+1)2^{-m}],\quad (m\in\N,\,\, 0\le i,j\le2^{m}-1).
\end{align}
Given any dyadic square $Q$, we denote by $\mathcal{C}(Q)$ its four children obtained by bisecting each side of $Q$.

A \emph{dyadic partition} $\T=\{Q\}$ is a finite set of nonoverlapping dyadic squares such that
\begin{align}
\nonumber
\Omega=\bigcup_{Q\in\T}Q.
\end{align}
See Figure \ref{figDyad} for a sketch of a dyadic partition.
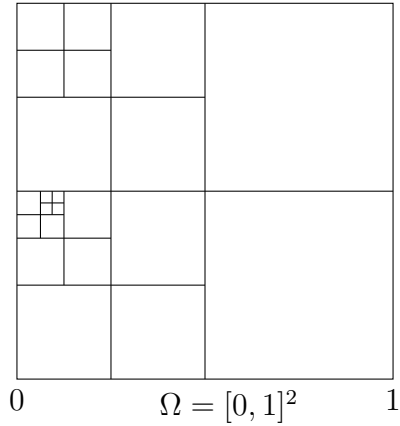
\begin{figure}[H]
\begin{tikzpicture}[scale=5]
\clip(-0.1,-0.15) rectangle (1.1,1.1);
\draw (0,0) -- (1,0) -- (1,1) -- (0,1) -- cycle;
\draw (0,0.5)-- (1,0.5);
\draw (0.5,0)-- (0.5,1);
\draw (0,0.75)-- (0.5,0.75);
\draw (0.25,0)-- (0.25,1);
\draw (0.125,0.75)-- (0.125,1);
\draw (0,0.875)-- (0.25,0.875);
\draw (0,0.375)-- (0.25,0.375);
\draw (0.125,0.25)-- (0.125,0.5);
\draw (0,0.25)-- (0.5,0.25);
\draw (0.0625,0.375)-- (0.0625,0.5);
\draw (0,0.4375)-- (0.125,0.4375);
\draw (0.09375,0.5)-- (0.0935,0.4375);
\draw (0.0625,0.46875)--(0.125,0.46875);
\draw (0.35,0) node[anchor=north west] {$\Omega=[0,1]^2$};
\draw (-0.05,0) node[anchor=north west] {$0$};
\draw (0.95,0) node[anchor=north west] {$1$};
\end{tikzpicture}
\caption{\label{figDyad} A dyadic partition.}
\end{figure}

\subsection{The refinement scheme}
Let $\T=\{Q\}$ be a dyadic partition of $\Omega$ and let $\T_Y$ be an arbitrary partition of $Y$. Let $(\tU,\tv,\tw)$ be the Galerkin projections of the weak solution $(U,v,w)$. Recall that 
\begin{align}
\nonumber
U-\tU=e_U=e_{U,\T}^1+e_{U,\T}^2,\quad{\rm where}\quad e_{U,\T}^1\in\S(\T),\,e_{U,\T}^2\in\S(\T)^\perp.
\end{align}
We define the \emph{error indicator} at $Q\in\T$ as
\begin{align}
\label{errInd}
\nu_\T(Q)=\left(\int_0^T\int_Q((e_{U,\T}^2)^2+(\nabla e_{U,N}^2)^2)dxdt\right)^{1/2}.
\end{align}
We shall now describe a way of generating a sequence $\{\T_i\}$ of dyadic partitions via feedback. Our discussion follows \cite{BabuskaVogelius} closely.

Assume that at some stage we have $\T_1,\T_2,...,\T_i$. To generate $\T_{i+1}$ from $\T_i=\{Q_k\}$, we set
\begin{align}
\nonumber
J_i=\left\{k:\nu_{\T_i}(Q_k)\ge\beta\max_{Q\in\T_i}\nu_{\T_i}(Q)\right\}
\end{align}
for some $\b\in(0,1)$. In other words, we mark the squares of $\T_i$ where the error indicator (\ref{errInd}) is large. Note that at least one square will always be marked.
Clearly the effect of the refinement scheme is to equidistribute the error.

The marked squares $\{Q_k:k\in J_i\}$ are subdivided to give $\T_{i+1}$, i.e.
\begin{align}
\label{refinement}
\T_{i+1}=\{\mathcal{C}(Q_k):k\in J_i\}\cup\{Q_k\in\T_i:k\notin J_i\},
\end{align}
where $\mathcal{C}(Q)$ denotes the children of $Q$.

\begin{prop}
\label{convergenceProp}
Let $\{\T_N\}$ be a sequence of dyadic partitions obtained from the refinement scheme and let
\begin{align}
\nonumber
e_{U,N}^2=e_{U,\T_N}^2.
\end{align}
Then we have
\begin{align}
\label{errLimit}
\lim_{N\rightarrow\infty}\|e_{U,N}^2\|_{L^2(S,H^1(\Omega))}=0.
\end{align}
\end{prop}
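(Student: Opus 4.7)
The plan is to adapt the Babu\v{s}ka--Vogelius feedback convergence argument to the present dyadic, two-dimensional, time-dependent setting. Let $P_N$ denote the orthogonal projection of $U(t,\cdot)$ onto $\S(\T_N)$ with respect to $\langle\cdot,\cdot\rangle_{H^1(\Omega)}$, so that $e^2_{U,N}(t,\cdot)=U(t,\cdot)-P_NU(t,\cdot)$ and, by the definition of the error indicator (\ref{errInd}),
\begin{align*}
E_N:=\|e^2_{U,N}\|^2_{L^2(S,H^1(\Omega))}=\sum_{Q\in\T_N}\nu_{\T_N}(Q)^2.
\end{align*}
Because the refinement rule (\ref{refinement}) only subdivides dyadic squares, the spline spaces are nested, $\S(\T_N)\subset\S(\T_{N+1})$, and hence $\|e^2_{U,N}(t,\cdot)\|_{H^1(\Omega)}$ is pointwise non-increasing in $N$. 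Consequently $E_N\searrow L$ for some $L\geq 0$, and the Pythagorean identity for nested subspaces yields $E_N-E_{N+1}=\int_S\|P_{N+1}U-P_NU\|^2_{H^1(\Omega)}\,dt\to 0$.

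The heart of the argument would then be to turn this global decrement into quantitative information about the marked indicators. Subdividing $Q\in J_N$ enriches $\S(\T_N)$ by spline functions supported on the children $\mathcal{C}(Q)$, and combining Galerkin orthogonality on the enlarged space with a local Bramble--Hilbert estimate on $\mathcal{C}(Q)$ --- legitimized by the $H^2$-regularity lift of Theorem \ref{highRegularity} --- should yield a lower bound of the form
\begin{align*}
E_N-E_{N+1}\geq c_0\sum_{Q\in J_N}\nu_{\T_N}(Q)^2 \;-\; C\Bigl(\max_{Q\in J_N}\diam(Q)\Bigr)^2\|U\|^2_{L^2(S,H^2(\Omega))}.
\end{align*}
Since the square attaining $\nu^\star_N:=\max_{Q\in\T_N}\nu_{\T_N}(Q)$ is always marked and every marked $Q$ satisfies $\nu_{\T_N}(Q)\geq\b\nu^\star_N$, combining this with $E_N-E_{N+1}\to 0$ forces $\nu^\star_N\to 0$; the remainder decays because repeated refinement eventually drives the diameter of persistently marked cells to zero.

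To conclude I would argue by contradiction. Assuming $L>0$, consider the normalized errors $f_N:=e^2_{U,N}/\|e^2_{U,N}\|_{L^2(S,H^1(\Omega))}$, which are bounded in $L^2(S,H^1(\Omega))$ and $H^1(\Omega)$-orthogonal to $\S(\T_N)$ at a.e.\ $t$. Since $\nu^\star_N\to 0$ and the marking rule equidistributes the error, a local approximation argument (again relying on the $H^2$-regularity of $U$ from Theorem \ref{highRegularity}) shows that $\bigcup_N\S(\T_N)$ is asymptotically dense in the subspace of $L^2(S,H^1_0(\Omega))$ carrying the projection errors; hence any weak subsequential limit of $\{f_N\}$ vanishes, contradicting $\|f_N\|=1$. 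The hard part will be the decrement lower bound: since the $H^1$-projection is global, quantifying how much a single refinement reduces the $L^2_tH^1_x$-error requires a careful interplay of Galerkin orthogonality, local spline approximation on $\mathcal{C}(Q)$, and the $H^2$-regularity of $U$. A secondary subtlety is that from $\nu^\star_N\to 0$ alone one must still control the growing number of cells, which is where equidistribution and the interpolation-type bound $\nu_{\T_N}(Q)\leq Ch_Q\|U\|_{L^2(S,H^2(Q))}$ enter decisively.
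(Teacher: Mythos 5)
Your setup is sound and matches the paper's starting point: the dyadic refinements give nested spaces $\S(\T_1)\subset\S(\T_2)\subset\cdots$, $e^2_{U,N}=U-P_NU$ with $P_N$ the $H^1$-orthogonal projection, and the errors decrease monotonically. However, the two load-bearing steps of your argument are left as ``should yield'' claims, and the mechanisms you propose for them are substantially harder than what is actually needed. The decrement lower bound $E_N-E_{N+1}\geq c_0\sum_{Q\in J_N}\nu_{\T_N}(Q)^2-C(\max_{Q\in J_N}\diam Q)^2\|U\|^2_{L^2(S,H^2(\Omega))}$ is the kind of local error-reduction estimate that, in AFEM theory, holds for \emph{residual-based} estimators under an interior-node property; here the indicator $\nu_{\T_N}(Q)$ is the restriction to $Q$ of a \emph{global} projection error, and for the $H^2$-conforming splines of this paper the new degrees of freedom created by bisecting $Q$ are not localized to $Q$, so there is no reason a single subdivision recovers a fixed fraction of $\nu_{\T_N}(Q)^2$. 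Likewise the bound $\nu_{\T_N}(Q)\leq Ch_Q\|U\|_{L^2(S,H^2(Q))}$ you invoke at the end is an interpolation estimate that does not transfer to the cellwise restriction of the global projection error. Your closing density argument (``$\bigcup_N\S(\T_N)$ is asymptotically dense in the subspace carrying the projection errors'') is essentially a restatement of the conclusion $(I-P)U=0$ rather than a proof of it.

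The paper reaches $\max_{Q\in\T_N}\nu_{\T_N}(Q)\to 0$ by a much more elementary route that requires no error-reduction estimate: by the abstract lemma on nested projections (Lemma 6.1 of Babu\v{s}ka--Vogelius), $e^2_{U,N}\to(I-P)U$ \emph{strongly} in $L^2(S,H^1(\Omega))$; and since only finitely many dyadic squares exceed any given area, the squares $\widetilde{Q}_N$ actually subdivided satisfy $|\widetilde{Q}_N|\to 0$, so $\nu_{\T_N}(\widetilde{Q}_N)\to 0$ by absolute continuity of the integral of the limit function. The marking rule $\nu_{\T_N}(\widetilde{Q}_N)\geq\beta\max_Q\nu_{\T_N}(Q)$ then forces the maximal indicator to zero. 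Finally, $(I-P)U=0$ is obtained by contradiction: if $(I-P)U(t,x_0)\neq 0$ on a time interval, a localized Ladyzhenskaya-type inequality $|e^2_{U,N}(t,x_0)|^2\leq C\|\nabla e^2_{U,N}\|_{L^2(Q_{0,N})}\|e^2_{U,N}\|_{L^2(Q_{0,N})}$ on the cell $Q_{0,N}\ni x_0$ bounds $\nu_{\T_N}(Q_{0,N})$ from below by a positive constant, contradicting the vanishing of the maximal indicator. Both of these devices are absent from your proposal, and without them (or a genuine proof of your decrement bound) the argument does not close.
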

\begin{proof}

Since $\T_{i+1}$ is a refinement of $\T_i$, we have
\begin{align}
\nonumber
\S(\T_1)\subset\S(\T_2)\subset...\subset\S(\T_i)\subset\S(\T_{i+1})\subset...\subset H^1(\Omega)
\end{align}
Consider on $L^2(S,H^1(\Omega))$ the inner product 
\begin{align}
\nonumber
\langle \varphi,\psi\rangle_{L^2(S,H^1(\Omega))}=\int_0^T\int_{\Omega}\nabla \varphi\nabla \psi+\varphi\psi dxdt,\quad  \varphi,\psi\in L^2(S,H^1(\Omega)).
\end{align}
It was observed previously that for a.e. $t\in S$, $\Re_U^N=U^N+e_{U,N}^1$ is the orthogonal projection of $U$ onto $\S(\T_N)$ with respect to $\langle\cdot,\cdot\rangle_{H^1(\Omega)}$. Therefore, we also have
\begin{align}
\nonumber
\langle U-\Re_U^N,s\rangle_{L^2(S,H^1(\Omega))}=0
\end{align}
for every $s\in\S(\T_N)$. 
Denote by $P_N$ the operator of orthogonal projection onto $\S(\T_N)$, so that $\Re_U^N=P_NU$.
Further, let $P$ denote the operator of orthogonal projection onto
\begin{align}
\nonumber
\overline{\bigcup_{i=1}^\infty\S(\T_i)}
\end{align}
By Lemma 6.1 in \cite{BabuskaVogelius}, we have
\begin{align}
\label{conv1}
\lim_{N\rightarrow\infty}P_NU=PU
\end{align}
in $L^2(S,H^1(\Omega))$.
We also have $e_{U,N}^2=U-P_NU$, so by (\ref{conv1})
\begin{align}
\lim_{N\rightarrow\infty}\|e^2_{U,N}\|_{L^2(S,H^1(\Omega))}=\|(I-P)U\|_{L^2(S,H^1(\Omega))}.
\end{align}
We want to show that $(I-P)U=0$.




For each $m\in\N$, denote by $\widetilde{Q}_m$ one of the squares divided in the transition from $\T_m$ to $\T_{m+1}$ (there is at least one). Since 
\begin{align}
\nonumber
\bigcup_{i=1}^\infty\T_i 
\end{align}
only contains a finite number of different squares with larger areas than a given positive quantity, and, since we exclude repetitions in $\{\widetilde{Q}_m\}$, it follows that
\begin{align}
\nonumber
|\widetilde{Q}_m|\rightarrow 0.
\end{align}
We have
\begin{align}
\nonumber
\nu_{\T_N}(\widetilde{Q}_N)^2=\int_0^T\int_{\widetilde{Q}_N}\left[(\nabla e^2_{U,N})^2+(e^2_{U,N})^2\right]dxdt.
\end{align}
By the fact that $e^2_{U,N}\rightarrow(I-P)U$ in $L^2(S,H^1(\Omega))$ and $|\widetilde{Q}_N|\rightarrow 0$, we have
$\nu_{\T_N}(\widetilde{Q}_N)\rightarrow 0$ by Lebesgue's dominated convergence.

Further, since $\widetilde{Q}_N$ was subdivied, we must have
\begin{align}
\nonumber
\nu_{\T_N}(\widetilde{Q}_N)\ge\beta\max_{Q\in\T_N}\nu_{\T_N}(Q) ,
\end{align}
whence 
\begin{align}
\label{Teo3}
\lim_{N\rightarrow\infty}\max_{Q\in\T_N}\nu_{\T_N}(Q)= 0.
\end{align}
Assume for a contradiction that there exists $x_0\in\Omega$ and an interval $(t_0,t_1)\subset (0,T)$ such that $(I-P)U(t,x_0)\neq0$ for all $t\in (t_0,t_1)$ . For each $N$ denote by $Q_{0,N}\in\T_N$ the square that contains $x_0$. Then there exists a $M\in\N$ such that for all $N\ge M$
\begin{align}
\nonumber
\int_{t_0}^{t_1}[e^2_{N,U}(t,x_0)]^2dt\ge\epsilon>0
\end{align}
At the same time, using the inequality
\begin{align}
\nonumber
|e^2_{U,N}(t,x_0)|^2\le C\|\nabla e^2_{U,N}\|_{L^2(Q_{N,0})}\|e^2_{U,N}\|_{L^2(Q_{N,0})}\quad(0<t<T)
\end{align}
(see \cite{LadyzenskajaEtAl}), we get
\begin{eqnarray}
\nonumber
0<\epsilon&\le&\left(\int_0^T\|e^2_{U,N}(t)\|^2_{L^2(Q_{0,N})}dt\right)^{1/2}\left(\int_0^T\|\nabla e^2_{U,N}(t)\|^2_{L^2(Q_{0,N})}dt\right)^{1/2}\\
\nonumber
&\le& 2\int_0^T\left[\|e^2_{U,N}(t)\|^2_{L^2(Q_{0,N})}+\|\nabla e^2_{U,N}(t)\|^2_{L^2(Q_{0,N})}\right]dt\\
\label{Teo4}
&=&2\nu_{\T_N}(Q_{0,N})\le C\max_{Q\in\T_N}\nu_{\T_N}(Q).
\end{eqnarray}
But (\ref{Teo4}) contradicts (\ref{Teo3}), hence $(I-P)U=0$ and
\begin{align}
\nonumber
\lim_{N\rightarrow\infty}\|e^2_{U,N}\|_{L^2(S,H^1(\Omega))}=0.
\end{align}
\end{proof}

Let $\{\mathcal{T}_i\}$ be as above, $\mathcal{T}_Y$ an arbitrary partition of $Y$ and $\widetilde{U}_i,\widetilde{v},\widetilde{w}$ be associated Galerkin projections. Then we have the following convergence result.
\begin{teo}
\label{mainTeo} 
With the notation above, we have
\begin{align}
\|e_{U,\mathcal{T}_N}\|^2_{L^2(S,H^1(\Omega))}+\|e_v\|^2_\cX+\|e_w\|^2_\cX=\O(h_Y^2)+\e_N
\end{align}
where $\lim_{N\rightarrow\infty}\e_N=0$.
\end{teo}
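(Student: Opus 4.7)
The statement is essentially a corollary chaining the three main tools developed in Sections 3--4: the micro-controlled-by-macro estimate of Proposition \ref{micromacroError}, the macroscopic reduction of Proposition \ref{UErrorProp}, and the feedback convergence of Proposition \ref{convergenceProp}. My plan is to combine them in that order and then verify that what survives on the right-hand side vanishes as $N\to\infty$.

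First, I apply Proposition \ref{micromacroError} with partitions $(\T_N,\T_Y)$ to obtain
$$\|e_v\|^2_\cX + \|e_w\|^2_\cX \le c^\star \|e_{U,\T_N}\|^2_{L^2(S,L^2(\Omega))} + \O(h_Y^2).$$
Since $\|\cdot\|_{L^2(S,L^2(\Omega))} \le \|\cdot\|_{L^2(S,H^1(\Omega))}$, it suffices to bound $\|e_{U,\T_N}\|^2_{L^2(S,H^1(\Omega))}$ by a quantity that converges to zero. This is precisely the role of Proposition \ref{UErrorProp}: squaring (\ref{errLimit0}) gives
$$\|e_{U,\T_N}\|^2_{L^2(S,H^1(\Omega))} \le 2C_U^2\,\varepsilon_U + 2C^2 \|e^1_{U,\T_N}(0)\|^2_{L^2(\Omega)},$$
with $\varepsilon_U$ built out of $e^2_{U,N}$ as in the statement. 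Adding the two estimates yields
$$\|e_{U,\T_N}\|^2_{L^2(S,H^1(\Omega))} + \|e_v\|^2_\cX + \|e_w\|^2_\cX \le C\,\varepsilon_U + C\|e^1_{U,\T_N}(0)\|^2_{L^2(\Omega)} + \O(h_Y^2),$$
which is already of the announced form once we set $\e_N := C\,\varepsilon_U + C\|e^1_{U,\T_N}(0)\|^2_{L^2(\Omega)}$.

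Next, I verify $\e_N\to 0$. Proposition \ref{convergenceProp} is tailor-made to ensure $\|e^2_{U,N}\|_{L^2(S,H^1(\Omega))}\to 0$ under the feedback refinement scheme, so $\varepsilon_U\to 0$ directly from its definition. For the initial-data contribution, the natural choice of $\tU_N(0)$ as the $H^1$-orthogonal Galerkin projection of $U_I$ onto $\S(\T_N)$ places $U_I - \tU_N(0)$ into $\S(\T_N)^\perp$, which makes $e^1_{U,\T_N}(0)\equiv 0$; more generally, if some other consistent initial approximation is used, the nesting $\S(\T_1)\subset\S(\T_2)\subset\cdots$ together with assumption (\ref{A7}) still forces $\|e^1_{U,\T_N}(0)\|_{L^2(\Omega)}\to 0$ along the refinement sequence.

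The main obstacle, in my view, is ensuring that the constant $C_U$ appearing in Proposition \ref{UErrorProp} is uniformly bounded along the refinement sequence. Since $C_U$ depends on $\|\partial_t e^1_{U,\T_N}\|_{L^2(S,L^2(\Omega))}$, such a uniform bound has to be drawn from the regularity lift of Theorem \ref{highRegularity} combined with the standard Galerkin-type estimate for the time derivative already invoked in the proof of Proposition \ref{micromacroError}. Once that uniform control is granted, the chain outlined above delivers the theorem with no additional work.
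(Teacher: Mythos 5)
Your proposal follows essentially the same route as the paper's own (very terse) proof: chain Theorem \ref{micromacroError}, Proposition \ref{UErrorProp}, and Proposition \ref{convergenceProp} in exactly that order, and absorb the surviving macroscopic terms into $\e_N$. If anything, you are more careful than the paper, which silently suppresses both the initial-data term $\|e^1_{U,\T_N}(0)\|_{L^2(\Omega)}$ and the question of whether $C_U$ is uniform along the refinement sequence — two points you correctly identify and address.
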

\begin{proof}
It follows from (\ref{errLimit0}) and (\ref{errLimit}) that
\begin{align}
\nonumber
\e_N=\|e_{U,\mathcal{T}_N}\|_{L^2(S,H^1(\Omega))}\rightarrow0
\end{align}  
as $N\rightarrow\infty$. Further, by (\ref{mainErrorEstimate}) we have
\begin{align}
\nonumber
\|e_U\|^2_{L^2(S,H^1(\Omega))}+\|e_v\|^2_\cX+\|e_w\|^2_\cX\leq \e_N+\O(h^2_Y),
\end{align}
this concludes the proof.
\end{proof}

\section*{Appendix A: Higher regularity of the weak solution to (\ref{system})-(\ref{systemInitial})}
In this appendix, we shall sketch a proof of Theorem \ref{highRegularity}. We follow closely the idea used in \cite[Theorem 7.1.5, p. 350]{Evans}; an alternative route could use the Nirenberg method based on difference quotients or some other classical technique pointing out the regularity lift. 
\begin{proof}[Proof of Theorem \ref{highRegularity}]
Let $\T_\Omega,\T_Y$ be any partitions and $(\tU,\tv,\tw)$ solves (\ref{GalU})-(\ref{GalW}). As in \cite[Theorem 7.1.5, p. 350]{Evans}, we are essentially done if we prove that
\begin{align}
\label{L^2TimeDerivative}
(\partial_tU,\partial_tv,\partial_tw)\in L^2(S,H^1(\Omega))\times [L^2(S,L^2(\Omega,H^1(Y)))]^2.
\end{align}
To simplify notation, we define the function
\begin{align}
\nonumber
J(g,t)=\int_{\Omega\times\Gamma^R}(\tv-\tU)gd\sigma_ydx
\end{align}
for any function $g=g(t,x,y)$ which is well-defined and Lebesgue integrable on $\Omega\times\Gamma^R$.

Testing with $(\partial_t\tU,\partial_t\tv,\partial_t\tw)\in\S(\T_\Omega)\times\S(\T_Y)^2$ in  (\ref{GalU})-(\ref{GalW}) and adding the equations, we obtain
\begin{align}
\nonumber
\|\partial_t\tU\|^2_{L^2(\Omega)}+\|\partial_t\tv\|^2_{L^2(\Omega\times Y)}+\|\partial_t\tw\|^2_{L^2(\Omega\times Y)}\hspace{3cm}\\
\nonumber
+\frac{d}{2dt}\left(\|\nabla\tU\|^2_{L^2(\Omega)}+\|\nabla_y\tv\|^2_{L^2(\Omega\times Y)}+\|\nabla_y\tw\|^2_{L^2(\Omega\times Y)}\right)+\a J(\partial_t\tv,t)\\
\label{regLift3}
\le|\a\gamma||J(\partial_t\tU,t)|+C\int_{\Omega\times Y}|\eta(\tv,\tw)|[|\partial_t\tv|+|\partial_t\tw|]
\end{align}
The term $J(\partial_t\tv,t)$ requires some analysis. Note that
\begin{eqnarray}
\nonumber
J(\partial_t\tv,t)&=&\int_{\Omega\times\Gamma^R}(\tv-\tU)\partial_t\tv d\sigma_ydx=\frac{d}{2dt}\int_{\Omega\times\Gamma^R}\tv^2d\sigma_ydx\\
\nonumber
&-&\int_{\Omega\times\Gamma^R}\tU\partial_t\tv d\sigma_ydx
\end{eqnarray}
Further,
\begin{align}
\nonumber
\int_{\Omega\times\Gamma^R}\tU\partial_t\tv d\sigma_ydx=\frac{d}{dt}\int_{\Omega\times\Gamma^R}\tU\tv d\sigma_ydx-\int_{\Omega\times\Gamma^R}\partial_t\tU\tv d\sigma_ydx
\end{align}
Consequently, denoting 
\begin{align}
\nonumber
\Theta(t)=\int_{\Omega\times\Gamma^R}(\tv^2-2\tU\tv)d\sigma_ydx, 
\end{align}
the estimate (\ref{regLift3}) can be written
\begin{align}
\nonumber
\|\partial_t\tU\|^2_{L^2(\Omega)}+\|\partial_t\tv\|^2_{L^2(\Omega\times Y)}+\|\partial_t\tw\|^2_{L^2(\Omega\times Y)}\hspace{3.0cm}\\
\nonumber
+\frac{d}{2dt}\left(\|\nabla\tU\|^2_{L^2(\Omega)}+\|\nabla_y\tv\|^2_{L^2(\Omega\times Y)}+\|\nabla_y\tw\|^2_{L^2(\Omega\times Y)}+\Theta\right)\hspace{1.0cm}\\
\le 
\label{regLift4}
C\left[|J(\partial_t\tU,t)|+\int_{\Omega\times\Gamma^R}|\tv\partial_t\tU|d\sigma_ydx+\int_{\Omega\times Y}|\eta(\tv,\tw)|[|\partial_t\tv|+|\partial_t\tw|]\right].
\end{align}
We estimate the right-hand side of (\ref{regLift4}).
By (\ref{epsilonAMGM}) with $\e>0$ and (\ref{InterpolationTrace}) with $\rho=1/2$, we have
\begin{align}
\nonumber
J(\partial_t\tU,t)\le\e|\Gamma^R|\|\partial_t\tU\|^2_{L^2(\Omega)}+c_\e\int_{\Omega\times\Gamma^R}(\tv-\tU)^2d\sigma_ydx\\
\label{regLift5}
\le\e|\Gamma^R|\|\partial_t\tU\|^2_{L^2(\Omega)}+C\left[\|\nabla_y\tv\|^2_{L^2(\Omega\times Y)}+\|\tU\|^2_{L^2(\Omega)} +\|\tv\|^2_{L^2(\Omega\times Y)} \right].
\end{align}
In the same way,
\begin{align}
\nonumber
\int_{\Omega\times\Gamma^R}|\tv\partial_t\tU|d\sigma_ydx\le\e|\Gamma^R|\|\partial_t\tU\|^2_{L^2(\Omega)}+\\
\label{regLift6}
+C\left[\|\nabla_y\tv\|^2_{L^2(\Omega\times Y)}+\|\tv\|^2_{L^2(\Omega\times Y)} \right]
\end{align}
Finally, using the Lipschitz continuity of $\eta$, the last term of (\ref{regLift4}) can be estimated 
\begin{align}
\nonumber
\int_{\Omega\times Y}|\eta(\tv,\tw)|[|\partial_t\tv|+|\partial_t\tw|]\le \frac{1}{2}\left(\|\partial_t\tv\|^2_{L^2(\Omega\times Y)}+\|\partial_t\tw\|^2_{L^2(\Omega\times Y)}\right)\\
\label{regLift7}
+C\left(\|\tv\|^2_{L^2(\Omega\times Y)}+\|\tw\|^2_{L^2(\Omega\times Y)}\right).
\end{align}
In summary, by choosing $\e=(4|\Gamma^R|)^{-1}$ in (\ref{regLift5}) and (\ref{regLift6}) and by using estimates (\ref{regLift4})-(\ref{regLift7}), we have
\begin{align}
\nonumber
\|\partial_t\tU\|^2_{L^2(\Omega)}+\|\partial_t\tv\|^2_{L^2(\Omega\times Y)}+\|\partial_t\tw\|^2_{L^2(\Omega\times Y)}+\\
\nonumber
+\frac{d}{dt}\left(\|\nabla\tU\|^2_{L^2(\Omega)}+\|\nabla_y\tv\|^2_{L^2(\Omega\times Y)}+\|\nabla_y\tw\|^2_{L^2(\Omega\times Y)}+\Theta\right)\\
\nonumber
\le C\left[\|\nabla\tU\|^2_{L^2(\Omega)}+\|\nabla_y\tv\|^2_{L^2(\Omega\times Y)}+\|\nabla_y\tw\|^2_{L^2(\Omega\times Y)}+\right.\\
\nonumber
\left.+\|\tU\|^2_{L^2(\Omega)}+\|\tv\|^2_{L^2(\Omega\times Y)}+\|\tw\|^2_{L^2(\Omega\times Y)}\right].
\end{align}
Set
\begin{align}
\nonumber
\Upsilon(t):=\|\partial_t\tU(t)\|^2_{L^2(\Omega)}+\|\partial_t\tv(t)\|^2_{L^2(\Omega\times Y)}+\|\partial_t\tw(t)\|^2_{L^2(\Omega\times Y)},
\end{align}
\begin{align}
\nonumber
\Xi(t):=\|\nabla\tU(t)\|^2_{L^2(\Omega)}+\|\nabla_y\tv(t)\|^2_{L^2(\Omega\times Y)}+\|\nabla_y\tw(t)\|^2_{L^2(\Omega\times Y)}+\Theta(t)
\end{align}
and
\begin{align}
\nonumber
\mu(t):=\|\tU(t)\|^2_{L^2(\Omega)}+\|\tv(t)\|^2_{L^2(\Omega\times Y)}+\|\tw(t)\|^2_{L^2(\Omega\times Y)},
\end{align}
then estimating $\Theta$ using (\ref{InterpolationTrace}), we obtain that for all $t\in S$
\begin{align}
\nonumber
\Upsilon(t)+\Xi'(t)\le C\Xi(t)+C\mu(t). 
\end{align}
By Gr\"{o}nwall's inequality, we see that
\begin{align}
\nonumber
\Xi(t)\le C\left[\Xi(0)+\int_0^T\mu(s)ds\right].
\end{align}
Clearly $\Xi(0)=C_I$ is a finite constant depending only on the Galerkin approximation of the initial values. Further, we get
\begin{align}
\nonumber
\int_0^T\left[\Upsilon(t)+\Xi(t)\right]dt\le C_I+C\int_0^T\mu(t)dt.
\end{align}
In other words, we have
\begin{align}
\nonumber
\|\partial_t\tU\|^2_{L^2(S\times\Omega))}+\|\partial_t\tv\|^2_{L^2(S\times\Omega\times Y))}+\|\partial_t\tw\|^2_{L^2(S\times\Omega\times Y)}\\
\nonumber
+\|\nabla\tU\|^2_{L^2(S,L^2(\Omega))}+\|\nabla_y\tv\|^2_{L^2(S,L^2(\Omega\times Y))}+\|\nabla_y\tw\|^2_{L^2(S,L^2(\Omega\times Y))}\\
\label{regLift8}
\le C_I+\|\tU\|^2_{L^2(S\times\Omega))}+\|\tv\|^2_{L^2(S\times\Omega\times Y)}+\|\tw\|^2_{L^2(S\times\Omega\times Y)}
\end{align}
Using the strong convergence in $L^2$ of $(\tU,\tv,\tw)$, we get
\begin{align}
\nonumber
(\partial_t\tU,\partial_t\tv,\partial_t\tw)\rightharpoonup(\partial_t U,\partial_t v,\partial_t w)\in  L^2(S,H^1(\Omega))\times [L^2(S,L^2(\Omega,H^1(Y)))]^2
\end{align}
as $\max(h_\Omega,h_Y)\rightarrow0$.
\end{proof}

\section*{Acknowledgements}
We are grateful to Michael Eden (Bremen) for useful discussions.

\end{document}